\newtheorem{Thm}{Theorem}[section]
\newtheorem{Prop}[Thm]{Proposition}
\newtheorem{Lem}[Thm]{Lemma}
\numberwithin{equation}{section}
\begin{document}

\title[$p$-harmonic boundaries and the first reduced $\ell^p$-cohomology]
{The $p$-harmonic boundary for finitely generated groups and the first reduced $\ell^p$-cohomology}

\author[M. J. Puls]{Michael J. Puls}
\address{Department of Mathematics \\
John Jay College-CUNY  \\
445 West 59th Street  \\
New York, NY 10019  \\
USA}
\email{mpuls@jjay.cuny.edu}

\begin{abstract}
Let $p$ be a real number greater than one and let $G$ be a finitely generated, infinite group. In this paper we introduce the $p$-harmonic boundary of $G$. We then characterize the vanishing of the first reduced $\ell^p$-cohomology of $G$ in terms of the cardinality of this boundary. Some properties of $p$-harmonic boundaries that are preserved under rough isometries are also given. We also study the relationship between translation invariant linear functionals on a certain difference space of functions on $G$, the $p$-harmonic boundary of $G$ and the first reduced $\ell^p$-cohomology of $G$.
\end{abstract}

\keywords{$p$-harmonic boundary, $\ell^p$-cohomology, $p$-harmonic boundary, rough isometry, translation invariant linear functional}
\subjclass[2000]{Primary: 43A15; Secondary: 20F65, 31C20, 60J50} 

\date{April, 2, 2008}
\maketitle

\section{Introduction}\label{Introduction}
Let $G$ be a finitely generated, infinite group with identity $e_G$ and symmetric generating set $S$. The set of all complex-valued functions on $G$ will be denoted by $\mathcal{F}(G)$, and the set of bounded functions in $\mathcal{F}(G)$ will be denoted by $\ell^{\infty} (G)$. Recall that $\ell^{\infty} (G)$ is a normed space under the sup-norm, so for $f \in \ell^{\infty}(G), \parallel f \parallel_{\infty} = \sup_{g \in G} |f(g) |$. Let $1 \leq p \in \mathbb{R}$ and set 
$$ D_p(G) = \{ f \in \mathcal{F}(G) \mid \sum_{g \in G} |f(gs^{-1}) - f(g) |^p < \infty \mbox{ for all } s \in S \}. $$
Observe that the constant functions, which we identify with $\mathbb{C}$, are in $D_p(G)$. We now define a pseudonorm on $D_p(G)$ by 
$$ \parallel f \parallel = \left( \sum_{s \in S} \sum_{g\in G} | f(gs^{-1}) - f(g) |^p \right)^{1/p} .$$
The set $D_p(G)$ is a reflexive Banach space with respect to the following norm 
$$ \parallel f \parallel_{D_p} = \left( \parallel f \parallel^p + \vert f(e_G) \vert^p \right)^{1/p}.$$
The norm for $D_p(G)$ depends on the symmetric generating set $S$, but the underlying topology does not. If $A \subseteq D_p(G)$, then $(\overline{A})_{D_p}$ will denote the closure of $A$ in $D_p(G)$. Let $\ell^p(G)$ be the set that consists of functions on $G$ for which $\sum_{g \in G} |f(g)|^p$ is finite. Observe that $\ell^p(G)$ is contained in $D_p(G)$. The first reduced $\ell^p$-cohomology space of $G$ is defined by
$$ \bar{H}^1_{(p)} (G) = D_p(G)/ (\overline{ \ell^p(G) \oplus \mathbb{C}})_{D_p}.$$
There has been some work done relating various boundaries of $G$ and the nonvanishing of $\bar{H}^1_{(p)} (G)$. It was shown in Chapter 8, section C2 of \cite{Gromov} (also see \cite{Elek} ) that if the $\ell_p$-corona of $G$ contains more than one element, then $\bar{H}^1_{(p)} (G) \neq 0$. In \cite{PulsADM} it was shown that if there is a Floyd boundary of $G$ containing more than two elements, and if the Floyd admissible function satisfies a certain decay condition, then $\bar{H}^1_{(p)} (G) \neq 0$. However, it is unknown if the converse of either of these two results is true. In this paper we introduce the $p$-harmonic boundary of a finitely generated group, see Section \ref{Mainresults} for the definition, which characterizes the vanishing of $\bar{H}^1_{(p)} (G)$ in terms of the cardinality of this boundary. We also prove results concerning the $p$-harmonic boundary and rough isometries. We conclude the paper by giving a link between the $p$-harmonic boundary of $G$ and continuous translation invariant linear functionals on a certain difference space of functions on $G$. This paper was inspired by \cite{Lee}. I would like to thank Peter Linnell for many useful comments on a preliminary version of this paper. 

\section{Outline of Paper and Statement of Main Results}\label{Mainresults}
We begin this section by defining the main object of study in this paper, the $p$-harmonic boundary. Let $ 1 \leq p \in \mathbb{R}$ and let $BD_p(G)$ be the set of bounded functions in $D_p(G)$. The set $BD_p(G)$ is a Banach space under the norm $\parallel f \parallel_{BD_p} = \parallel f \parallel_{\infty} + \parallel f \parallel$, where $f \in BD_p(G)$. For $X \subseteq BD_p(G), (\overline{X})_{BD_p}$ will denote the closure of $X$ in $BD_p(G)$. Under the usual operations of pointwise multiplication, addition and scalar multiplication, $BD_p(G)$ is an abelian Banach algebra. A character on $BD_p(G)$ is a nonzero homomorphism from $BD_p(G)$ into the complex numbers. We denote by $Sp(BD_p(G))$ the set of characters on $BD_p(G)$. With respect to the weak $\ast$-topology, $Sp(BD_p(G))$ is a compact Hausdorff space. The space $Sp(BD_p(G))$ is known as the spectrum of $BD_p(G)$. Let $C(Sp(BD_p(G)))$ denote the set of continuous functions on $Sp(BD_p(G))$. For each $f \in BD_p(G)$ a continuous function $\hat{f}$ can be defined on $Sp(BD_p(G))$ by $\hat{f}(\tau) = \tau(f)$. The map $f \rightarrow \hat{f}$ is known as the Gelfand transform.

Define a map $i \colon G \rightarrow Sp(BD_p(G))$ by $(i(g))(f) = f(g)$. For $g \in G$, define $\delta_g$ by $\delta_g(x) = 0$ if $x \neq g$ and $\delta_g (g) = 1$. Let $g, h \in G$ and suppose $i(g) = i(h)$, then $(i(g))(\delta_g) = (i(h))(\delta_g)$ which implies $\delta_g(g) = \delta_g(h)$. Thus $i$ is an injection. If $f$ is a nonzero function in $BD_p(G)$, then there exists a $g \in G$ such that $\hat{f}(i(g)) \neq 0$ since $\hat{f}(i(g)) = f(g)$. Hence $BD_p(G)$ is semisimple. Theorem 4.6 on page 408 of \cite{TaylorLay} now tells us that $BD_p(G)$ is isomorphic to a subalgebra of $C(Sp(BD_p(G)))$ via the Gelfand transform. Since the Gelfand transform separates points of $Sp(BD_p(G))$ and the constant functions are contained in $BD_p(G)$, the Stone-Weierstrass Theorem yields that $BD_p(G)$ is dense in $C(Sp(BD_p(G)))$ with respect to the sup-norm. The following proposition, which is essentially \cite[Proposition 1.1]{Elek}, shows that $i(G)$ is dense in $Sp(BD_p(G))$.

\begin{Prop} \label{dense}
The image of $G$ under $i$ is dense in $Sp(BD_p(G))$.
\end{Prop}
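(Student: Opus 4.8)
The plan is to argue by contradiction, using the two facts already established in the excerpt: that $Sp(BD_p(G))$ is a compact Hausdorff space, and that the Gelfand transforms $\{\hat{f} : f \in BD_p(G)\}$ are dense in $C(Sp(BD_p(G)))$ for the sup-norm. Suppose $i(G)$ were not dense and let $K$ denote the closure of $i(G)$ in $Sp(BD_p(G))$. Then $K$ is a proper closed, hence compact, subset, so I may fix a character $\tau_0 \in Sp(BD_p(G)) \setminus K$. Since a compact Hausdorff space is normal, Urysohn's lemma supplies a function $F \in C(Sp(BD_p(G)))$ with $0 \leq F \leq 1$, $F(\tau_0) = 1$, and $F \equiv 0$ on $K$.

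Next I would transfer this information back to $G$. By the density of the Gelfand transforms in the sup-norm, choose $f \in BD_p(G)$ with $\parallel \hat{f} - F \parallel_{\infty} < 1/3$. Because $F$ vanishes on $i(G) \subseteq K$ and $\hat{f}(i(g)) = (i(g))(f) = f(g)$, this forces $|f(g)| < 1/3$ for every $g \in G$, so $\parallel f \parallel_{\infty} \leq 1/3$. Evaluating instead at $\tau_0$ gives $|\tau_0(f)| = |\hat{f}(\tau_0)| \geq F(\tau_0) - \parallel \hat{f} - F \parallel_{\infty} > 2/3$. The contradiction I am aiming for is that these two estimates are incompatible, which will be so once I know that every character is controlled by the sup-norm, that is $|\tau_0(f)| \leq \parallel f \parallel_{\infty}$.

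Establishing this last inequality is the heart of the matter and the step I expect to require genuine work, since the Banach algebra norm $\parallel \cdot \parallel_{BD_p}$ is strictly larger than $\parallel \cdot \parallel_{\infty}$ and only controls $|\tau_0(f)|$ directly. The device is to pass to powers. For a character on a unital Banach algebra one has $|\tau_0(f)|^n = |\tau_0(f^n)| \leq \parallel f^n \parallel_{BD_p}$, so I need a good bound on $\parallel f^n \parallel_{BD_p}$. I would estimate the Dirichlet seminorm of $f^n$ using the factorization $a^n - b^n = (a-b)\sum_{k=0}^{n-1} a^k b^{n-1-k}$, which yields $|f(gs^{-1})^n - f(g)^n| \leq n \parallel f \parallel_{\infty}^{n-1} |f(gs^{-1}) - f(g)|$; summing over $s \in S$ and $g \in G$ gives $\parallel f^n \parallel \leq n \parallel f \parallel_{\infty}^{n-1} \parallel f \parallel$. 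Hence $\parallel f^n \parallel_{BD_p} = \parallel f \parallel_{\infty}^n + \parallel f^n \parallel \leq \parallel f \parallel_{\infty}^n + n \parallel f \parallel_{\infty}^{n-1} \parallel f \parallel$, so for $f \neq 0$ we get $\parallel f^n \parallel_{BD_p}^{1/n} \leq \parallel f \parallel_{\infty}\,(1 + n \parallel f \parallel / \parallel f \parallel_{\infty})^{1/n} \to \parallel f \parallel_{\infty}$ as $n \to \infty$.

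Combining these, $|\tau_0(f)| \leq \parallel f^n \parallel_{BD_p}^{1/n}$ for every $n$, and letting $n \to \infty$ gives $|\tau_0(f)| \leq \parallel f \parallel_{\infty} \leq 1/3$ (the case $f = 0$ being trivial, since then $|\tau_0(f)| = 0$). This contradicts $|\tau_0(f)| > 2/3$, and therefore $i(G)$ must be dense in $Sp(BD_p(G))$.
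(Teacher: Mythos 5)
Your proof is correct, and its skeleton---contradiction, Urysohn's lemma on the compact Hausdorff space $Sp(BD_p(G))$, then approximation of the Urysohn function by Gelfand transforms---is the same as the paper's. The genuine difference is in how the contradiction is closed. The paper takes $(f_n)$ in $BD_p(G)$ with $\widehat{f_n} \rightarrow h$ uniformly on the spectrum, notes that $h = 0$ on $i(G)$ forces $\parallel f_n \parallel_{\infty} \rightarrow 0$, and then simply asserts ``thus $h = 0$''; that last step silently requires that smallness of $f_n$ in the sup-norm on $G$ controls $\widehat{f_n}$ on the whole spectrum, i.e.\ exactly the inequality $\vert \tau(f) \vert \leq \parallel f \parallel_{\infty}$ for every character $\tau$, which is not automatic, since characters are a priori only bounded by the full algebra norm $\parallel f \parallel_{BD_p} = \parallel f \parallel_{\infty} + \parallel f \parallel$, and the Dirichlet part of $f_n$ need not go to zero. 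You correctly identify this as the heart of the matter and prove it by the spectral-radius power trick: $\vert \tau_0(f) \vert^n = \vert \tau_0(f^n) \vert \leq \parallel f^n \parallel_{BD_p}$, together with the estimates $\parallel f^n \parallel_{\infty} = \parallel f \parallel_{\infty}^n$ and $\parallel f^n \parallel \leq n \parallel f \parallel_{\infty}^{n-1} \parallel f \parallel$ (from the factorization of $a^n - b^n$), whose combined $n$-th root tends to $\parallel f \parallel_{\infty}$. So your version buys rigor at precisely the point where the paper's proof is glib, making explicit that the Gelfand sup-norm (spectral radius) of any $f \in BD_p(G)$ is bounded by $\parallel f \parallel_{\infty}$; the paper's version buys brevity at the cost of leaving that key estimate implicit. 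Your quantitative bookkeeping ($1/3$ versus $2/3$) and the treatment of the trivial case $f = 0$ are both fine.
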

\begin{proof}
Let $K$ be the closure of the image of $i$. Suppose that $K \neq Sp(BD_p(G))$. By Urysohn's Lemma there exists a nonzero element $h \in C(Sp(BD_p(G)))$ such that $h\mid_K = 0$. Let $(f_n)$ be a sequence in $ BD_p(G)$ that converges to $h$ in the sup-norm. So given $\epsilon > 0$ there exists a number $N$ such that $\vert \widehat{f_n} (i(g)) - h(i(g)) \vert < \epsilon$ for all $g \in G$ and for all $n > N$. Consequently, $(f_n) \rightarrow 0$ in $BD_p(G)$ with respect to the sup-norm. Thus $h = 0$, which is a contradiction.
\end{proof}
When the context is clear we will abuse notation and write $G$ for $i(G)$ and $g$ for $i(g)$. The compact Hausdorff space $Sp(BD_p(G)) \backslash G$ is known as the $p$-Royden boundary of $G$. When $p =2$ this is simply known as the Royden boundary of $G$. Let $\mathbb{C}G$ be the set of functions on $G$ with finite support and let $B(\overline{\mathbb{C}G})_{D_p}$ denote the set of bounded functions in $(\overline{\mathbb{C}G})_{D_p}$. Suppose $(f_n)$ is a sequence in $B(\overline{\mathbb{C}G})_{D_p}$ that converges to a bounded function $f$ in the $BD_p(G)$-norm. It now follows from $\parallel f - f_n \parallel_{D_p} \leq \parallel f - f_n \parallel_{BD_p}$ that $f \in (\overline{\mathbb{C}G})_{D_p}$. Thus $B(\overline{\mathbb{C}G})_{D_p}$ is closed in $BD_p(G)$ with respect to the $BD_p(G)$-norm. The {\em $p$-harmonic boundary} of $G$ is the following subset of the $p$-Royden boundary
$$\partial_p(G) \colon= \{ x \in Sp(BD_p(G)) \backslash G \mid \hat{f}(x) = 0 \mbox{ for all } f \in B(\overline{\mathbb{C}G})_{D_p}\}.$$
In this paper we will use the notation $\#(A)$ to mean the cardinality of a set $A$.

In Section \ref{pharmbound} we prove various results concerning $\partial_p(G)$. With these results in hand we will be able to prove the following characterization concerning the vanishing of $\bar{H}^1_{(p)} (G)$. 

\begin{Thm} \label{charvanishing}
Let $1 < p \in \mathbb{R}$. Then $\bar{H}^1_{(p)}(G) \neq 0$ if and only if $\#(\partial_p(G)) > 1$.
\end{Thm}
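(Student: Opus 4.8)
The plan is to recast the whole statement inside the commutative Banach algebra $BD_p(G)$ and its Gelfand theory, so that the (non)vanishing of $\bar H^1_{(p)}(G)$ becomes a statement about functions separating the points of $\partial_p(G)$. The first step I would carry out is the reduction
$$\bar H^1_{(p)}(G)=0 \iff BD_p(G)=B(\overline{\mathbb{C}G})_{D_p}+\mathbb{C}.$$
Both sides say that every Dirichlet function is, up to an additive constant, a ``potential.'' Passing from $D_p(G)$ to $BD_p(G)$ is a truncation argument: truncating $f\in D_p(G)$ at a finite height does not increase any of the difference sums $\sum_g|f(gs^{-1})-f(g)|^p$, so the truncations lie in $BD_p(G)$ and converge to $f$ in $D_p(G)$; since $(\overline{\ell^p(G)\oplus\mathbb{C}})_{D_p}$ is closed, it suffices to test bounded functions. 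Replacing $\ell^p(G)$ by $\mathbb{C}G$ is legitimate because finitely supported functions are $D_p$-dense in $\ell^p(G)$. The only delicate point is pinning down the additive constant: a bounded $f\in(\overline{\mathbb{C}G\oplus\mathbb{C}})_{D_p}$ is a $D_p$-limit $\lim(\phi_n+c_n)$ with $\phi_n\in\mathbb{C}G$, and one must show the scalars $c_n$ either converge, so that $f-c\in B(\overline{\mathbb{C}G})_{D_p}$, or can be absorbed into the $\phi_n$ (precisely when the constants are themselves $D_p$-limits of finitely supported functions). Both cases yield the displayed equivalence.

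Granting this reduction, one direction is short. If $\#(\partial_p(G))>1$, choose $x_1\neq x_2$ in $\partial_p(G)$; since the Gelfand transform separates the points of $Sp(BD_p(G))$, there is $f\in BD_p(G)$ with $\hat f(x_1)\neq\hat f(x_2)$. If we could write $f=w+c$ with $w\in B(\overline{\mathbb{C}G})_{D_p}$ and $c\in\mathbb{C}$, then $\hat f(x_i)=\hat w(x_i)+c=c$ for $i=1,2$, because $\hat w$ vanishes on $\partial_p(G)$ by the very definition of the $p$-harmonic boundary; this contradicts $\hat f(x_1)\neq\hat f(x_2)$. Hence $BD_p(G)\neq B(\overline{\mathbb{C}G})_{D_p}+\mathbb{C}$, and the reduction gives $\bar H^1_{(p)}(G)\neq 0$.

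The reverse direction — and the main obstacle — is to show that $\#(\partial_p(G))\le 1$ forces $BD_p(G)=B(\overline{\mathbb{C}G})_{D_p}+\mathbb{C}$. The heart of the matter is the characterization
$$B(\overline{\mathbb{C}G})_{D_p}=\{\,f\in BD_p(G)\mid \hat f(x)=0\ \text{for all }x\in\partial_p(G)\,\},$$
in which the inclusion $\subseteq$ is the definition of $\partial_p(G)$ but the reverse inclusion is genuinely hard. I would establish it by developing, in Section \ref{pharmbound}, a $p$-Royden decomposition $f=u+w$ of each $f\in BD_p(G)$ into a bounded $p$-harmonic function $u$ and a potential $w\in B(\overline{\mathbb{C}G})_{D_p}$, together with a maximum principle asserting that a bounded $p$-harmonic function attains its supremum and infimum on $\partial_p(G)$. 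Since $\hat w$ vanishes on $\partial_p(G)$, the hypothesis $\hat f|_{\partial_p(G)}=0$ forces $\hat u|_{\partial_p(G)}=0$, whence $u\equiv 0$ and $f=w$ is a potential. With this characterization in hand: if $\partial_p(G)=\{x_0\}$ then $f-\hat f(x_0)$ vanishes on $\partial_p(G)$ and so is a potential, giving $f\in B(\overline{\mathbb{C}G})_{D_p}+\mathbb{C}$; and if $\partial_p(G)=\emptyset$ one shows $1\in B(\overline{\mathbb{C}G})_{D_p}$, so that every bounded $p$-harmonic function is constant and hence again a potential.

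I expect the decomposition and the boundary maximum principle to be the technically demanding ingredients, and this is where $1<p$ is used: the decomposition comes from minimizing the $p$-Dirichlet energy and extracting the minimizer via reflexivity of $D_p(G)$, while the maximum principle amounts to showing that the supremum of $\hat u$ over the compact space $Sp(BD_p(G))$ is already attained on $\partial_p(G)$, i.e. that neither interior points of $G$ nor Royden boundary points outside $\partial_p(G)$ can support it. Once these facts are proved, assembling them with the reduction of the first paragraph and the easy direction above yields the stated equivalence.
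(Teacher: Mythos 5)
Your proposal is correct in outline, and its two hard ingredients are exactly the ones the paper develops: your characterization $B(\overline{\mathbb{C}G})_{D_p}=\{f\in BD_p(G)\mid \hat f(x)=0 \mbox{ for all } x\in\partial_p(G)\}$ is the paper's Theorem \ref{vanishbound}, and the decomposition $f=u+w$ into a bounded $p$-harmonic part and a potential, together with the maximum-principle argument that kills a bounded $p$-harmonic function vanishing on $\partial_p(G)$, is precisely Proposition \ref{decomp} and the second half of the proof of Theorem \ref{vanishbound}; your separating-function and one-point arguments likewise mirror Theorem \ref{plious}. Where you genuinely diverge is the passage between $D_p(G)$, where the cohomology lives, and the bounded algebra $BD_p(G)$. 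The paper never proves your reduction $\bar H^1_{(p)}(G)=0\iff BD_p(G)=B(\overline{\mathbb{C}G})_{D_p}+\mathbb{C}$; instead it routes through $p$-harmonic function theory, invoking three external results: $\bar H^1_{(p)}(G)\neq 0$ iff $HD_p(G)\neq\mathbb{C}$ (Theorem 3.5 of \cite{Puls}), $HD_p(G)=\mathbb{C}$ iff $BHD_p(G)=\mathbb{C}$ (Lemma 4.4 of \cite{H2}), and a criterion from \cite{PulsCMB} to dispose of the case $\partial_p(G)=\emptyset$. Your truncation argument replaces all of these with an elementary, self-contained step, and it is sound: truncation is a pointwise $1$-Lipschitz operation, so Dirichlet sums do not increase and dominated convergence gives $D_p$-convergence of the truncates; your dichotomy for the constants $c_n$ is also legitimate, because when $1_G\notin(\overline{\mathbb{C}G})_{D_p}$ the sum $(\overline{\mathbb{C}G})_{D_p}\oplus\mathbb{C}$ of a closed subspace and a finite-dimensional complement meeting it trivially is closed with continuous projections, while when $1_G\in(\overline{\mathbb{C}G})_{D_p}$ the constants are absorbed (and in that empty-boundary case the quickest finish is the ideal property of $B(\overline{\mathbb{C}G})_{D_p}$, which with $1_G$ inside forces $BD_p(G)=B(\overline{\mathbb{C}G})_{D_p}$, rather than your slightly circuitous appeal to harmonic functions). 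It is worth noting that truncation could not have replaced Lemma 4.4 of \cite{H2} directly, since truncating destroys $p$-harmonicity; your reduction sidesteps harmonic functions at that stage, so the issue never arises. What the paper's route buys is the explicit chain of equivalences with $HD_p(G)=\mathbb{C}$ and $BHD_p(G)=\mathbb{C}$, which it reuses in the omnibus theorem of Section \ref{tilf}; what your route buys is independence from those external results, at the price of re-proving the $D_p$-to-$BD_p$ bridge by hand.
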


We will conclude Section \ref{pharmbound} by describing a neighborhood base for the topology on $\partial_p(G)$.

Let $X$ denote the Cayley graph of $G$ with respect to the generating set $S$. Thus the vertices of $X$ are elements of $G$, and $g_1, g_2 \in G$ are joined by an edge if and only if $g_1 = g_2s^{\pm 1}$ for some generator $s$. We make $X$ into a metric space by assigning length one to each edge, and defining the distance $d_S(g_1, g_2)$ between any two vertices $g_1, g_2$ in $X$ to be the length of the shortest path between $g_1$ and $g_2$. The metric $d_S$ on $X$ is known as the word metric. For the rest of this paper we will drop the use of the subscript $S$ and $d(g_1, g_2)$ will always denote the distance between $g_1$ and $g_2$ in the word metric. We will denote $d(e_G, g)$ by $|g|$ for $g \in G$. From now on we will implicitly assume that all finitely generated groups are equipped with the word metric. 

Let $(X, d_X)$ and $(Y, d_Y)$ be metric spaces. A map $\phi \colon X \rightarrow Y$ is said to be a {\em rough isometry} if it satisfies the following two conditions:
\begin{enumerate}
\item There exists constants $ a \geq 1, b \geq 0$ such that for $x_1, x_2 \in X$
$$ \frac{1}{a} d_X (x_1, x_2) - b \leq d_Y (\phi(x_1), \phi(x_2)) \leq a d_X(x_1, x_2) + b  $$
\item There exists a positive constant $c$ such that for each $y \in Y$, there exists an $x \in X$ that satisfies $d_Y ( \phi(x), y) < c$.
\end{enumerate}

For a rough isometry $\phi$ there exists a rough isometry $\psi \colon Y \rightarrow X$ such that if $x \in X, y \in Y$, then $d_X( (\psi \circ \phi)(x), x) \leq a(c+b)$ and $d_Y((\phi \circ \psi)(y), y) \leq c$. The map $\psi$, which is not unique, is said to be a rough inverse for $\phi$. Whenever we refer to a rough inverse to a rough isometry in this paper, it will always satisfy the above conditions. In Section \ref{homeomorphicproof} we will prove
\begin{Thm} \label{homeomorphic} Let $G$ and $H$ be finitely generated groups. If there is a rough isometry from $G$ to $H$, then $\partial_p(G)$ is homeomorphic to $\partial_p(H)$.
\end{Thm}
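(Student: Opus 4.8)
The plan is to show that a rough isometry $\phi\colon G\to H$ induces, by pullback of functions, a continuous unital algebra homomorphism of Royden algebras which descends to a homeomorphism of the $p$-harmonic boundaries, with the rough inverse $\psi$ supplying the inverse map. For $f\in\mathcal F(H)$ write $\phi^{\ast}f=f\circ\phi\in\mathcal F(G)$. The first task is to prove that $\phi^{\ast}$ restricts to a bounded linear map $D_p(H)\to D_p(G)$, hence to a bounded unital algebra homomorphism $BD_p(H)\to BD_p(G)$. The bound $\parallel\phi^{\ast}f\parallel_{\infty}\le\parallel f\parallel_{\infty}$ and the identity $\phi^{\ast}(fg)=(\phi^{\ast}f)(\phi^{\ast}g)$ are immediate, so the content is the energy estimate. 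Given $g\in G$ and $s\in S$, condition (1) of a rough isometry gives $d(\phi(gs^{-1}),\phi(g))\le a+b$; joining these two points of $H$ by a geodesic and applying the power--mean inequality bounds $|f(\phi(gs^{-1}))-f(\phi(g))|^p$ by $(a+b)^{p-1}$ times a sum of $p$-th powers of the edge differences of $f$ along that geodesic. Summing over $g$ and $s$ and exchanging the order of summation, each edge of $H$ is counted only boundedly often: because $\phi$ is coarsely injective and $G$ is finitely generated, the $g$ whose geodesic passes over a fixed edge lie in a ball of fixed radius, and balls in $G$ have uniformly bounded cardinality. This yields $\parallel\phi^{\ast}f\parallel\le C\parallel f\parallel$ for a constant $C=C(a,b,p,S)$; a companion estimate at $e_G$ controls $|f(\phi(e_G))|$ and upgrades this to $\parallel\phi^{\ast}f\parallel_{D_p}\le C'\parallel f\parallel_{D_p}$. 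The same reasoning applies verbatim to $\psi$.

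Then I pass to spectra. Since $\phi^{\ast}$ is a continuous unital homomorphism, the map $\phi^{\#}\colon Sp(BD_p(G))\to Sp(BD_p(H))$, $\phi^{\#}(\tau)=\tau\circ\phi^{\ast}$, is well defined and continuous for the weak $\ast$-topologies, and a direct computation gives $\phi^{\#}(i(g))=i(\phi(g))$ and $\psi^{\#}(i(h))=i(\psi(h))$. Next I show $\phi^{\#}$ sends characters vanishing on $B(\overline{\mathbb{C}G})_{D_p}$ to characters vanishing on $B(\overline{\mathbb{C}H})_{D_p}$. This rests on the observation that $\phi^{\ast}$ carries $\mathbb{C}H$ into $\mathbb{C}G$, since the preimage under a coarsely injective map of a finite set is finite; by $D_p$-continuity it follows that $\phi^{\ast}\bigl(B(\overline{\mathbb{C}H})_{D_p}\bigr)\subseteq B(\overline{\mathbb{C}G})_{D_p}$. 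Consequently, for $\tau\in\partial_p(G)$ and $w\in B(\overline{\mathbb{C}H})_{D_p}$ we get $\widehat{w}(\phi^{\#}(\tau))=\tau(\phi^{\ast}w)=0$, so $\phi^{\#}(\tau)$ annihilates $B(\overline{\mathbb{C}H})_{D_p}$; the analogous statement holds for $\psi^{\#}$.

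It remains to see that $\phi^{\#}$ and $\psi^{\#}$ restrict to mutually inverse maps between $\partial_p(G)$ and $\partial_p(H)$. The crucial ingredient is a perturbation lemma: if $\alpha\colon G\to G$ satisfies $\sup_{g}d(\alpha(g),g)=:N<\infty$, then $u\circ\alpha-u\in\ell^p(G)$ for every $u\in BD_p(G)$. Indeed, the same geodesic-plus-multiplicity estimate, now summed against displacement at most $N$, gives $\sum_{g}|u(\alpha(g))-u(g)|^p\le C\parallel u\parallel^p<\infty$. Since truncation shows $\ell^p(G)\subseteq(\overline{\mathbb{C}G})_{D_p}$ and $u\circ\alpha-u$ is bounded, we conclude $u\circ\alpha-u\in B(\overline{\mathbb{C}G})_{D_p}$. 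Applying this with $\alpha=\psi\circ\phi$, which displaces points by at most $a(c+b)$, gives for $\tau\in\partial_p(G)$ and all $u\in BD_p(G)$
\[
(\psi^{\#}\circ\phi^{\#})(\tau)(u)=\tau(u\circ\psi\circ\phi)=\tau(u)+\tau(u\circ\psi\circ\phi-u)=\tau(u),
\]
so $\psi^{\#}\circ\phi^{\#}$ is the identity on $\partial_p(G)$; symmetrically $\phi^{\#}\circ\psi^{\#}$ is the identity on $\partial_p(H)$. In particular $\phi^{\#}(\tau)$ cannot be a point $i(h)$ of $H$, for otherwise $\tau=\psi^{\#}(i(h))=i(\psi(h))\in G$, contrary to $\tau\in\partial_p(G)$; hence $\phi^{\#}(\partial_p(G))\subseteq\partial_p(H)$ and likewise $\psi^{\#}(\partial_p(H))\subseteq\partial_p(G)$. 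Thus $\phi^{\#}$ and $\psi^{\#}$ are continuous mutually inverse bijections between $\partial_p(G)$ and $\partial_p(H)$, that is, homeomorphisms.

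The main obstacle is the energy estimate underlying both the boundedness of $\phi^{\ast}$ and the perturbation lemma: one must convert the pointwise displacement bounds coming from the rough isometry into global control of $p$-Dirichlet sums, and the edge contributions can only be reassembled through the uniform finiteness of balls in a finitely generated group together with the coarse injectivity and coboundedness of $\phi$. Getting the multiplicity bookkeeping right---verifying that each edge of the target graph is traversed by the chosen geodesics for only a uniformly bounded number of source vertices---is where the real work lies; once that counting is in place, the passage to spectra and the inversion argument are essentially formal.
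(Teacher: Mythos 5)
Your argument is correct, and it reaches the theorem by a genuinely different route than the paper. The paper constructs the boundary map concretely on sequences: for $x \in \partial_p(G)$ it picks $(g_n) \rightarrow x$, passes to a subsequence so that $(\phi(g_n))$ converges in $Sp(BD_p(H))$, and must then separately prove well-definedness (Proposition \ref{welldefined}), injectivity and surjectivity (via Lemma \ref{ontolemma}, which asserts only that $|f((\psi \circ \phi)(g)) - f(g)| \rightarrow 0$ as $|g| \rightarrow \infty$), and finally continuity, the last by invoking the neighborhood base of $D_p$-massive subsets from Proposition \ref{base} together with Proposition \ref{decomp} and Theorem \ref{vanishbound}. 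You instead take the Gelfand-dual route: $\phi^{\#}(\tau) = \tau \circ \phi^{\ast}$ is defined on the whole spectrum, is continuous for the weak $\ast$-topologies for free, and restricts to a map $\partial_p(G) \rightarrow \partial_p(H)$ because $\phi^{\ast}$ carries $B(\overline{\mathbb{C}H})_{D_p}$ into $B(\overline{\mathbb{C}G})_{D_p}$. Your inversion step rests on a sharpening of Lemma \ref{ontolemma}: rather than mere vanishing at infinity, you show $u \circ \psi \circ \phi - u \in \ell^p(G) \cap \ell^{\infty}(G) \subseteq B(\overline{\mathbb{C}G})_{D_p}$, so every boundary character annihilates it; this makes $\psi^{\#} \circ \phi^{\#} = \mathrm{id}$ on $\partial_p(G)$ a one-line computation and simultaneously shows $\phi^{\#}(\tau)$ cannot lie in $i(H)$. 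The underlying analytic input is identical in both proofs --- the geodesic-plus-multiplicity counting of Lemma \ref{adjoint}, which you reproduce correctly both for the boundedness of $\phi^{\ast}$ and for the perturbation lemma --- but your architecture buys automatic continuity and dispenses entirely with the subsequence bookkeeping and the potential-theoretic machinery (massive subsets, inner potentials, maximum principle) that the paper needs for its continuity argument; what the paper's sequence description buys in return is a concrete picture of $\bar{\phi}$ that it reuses in the proof of Theorem \ref{bijection}. Incidentally, your construction shows that the full sequence $(\phi(g_n))$ converges to $\phi^{\#}(\tau)$ whenever $(g_n) \rightarrow \tau$, so your map agrees with the paper's $\bar{\phi}$ and no subsequence extraction is needed at all.
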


Let $f \in {\mathcal F}(G)$ and let $g \in G$. Define 
$$ \triangle_p f(g) \colon = \sum_{s \in S} |f(gs^{-1}) - f(g) |^{p-2} (f(gs^{-1}) - f(g)).$$
In the case $1 < p < 2$, we make the convention that $|f(gs^{-1}) - f(g)|^{p-2}(f(gs^{-1}) - f(g)) =0$ if $f(gs^{-1}) = f(g)$. We shall say that $f$ is {\em $p$-harmonic} if $f \in D_p (G)$ and $\triangle_p f(g) = 0$ for all $g \in G$. Denote the set of $p$-harmonic functions on $G$ by $HD_p(G)$. The set of bounded functions in $HD_p(G)$ will be denoted by $BHD_p(G)$. Observe that the constant functions are in $BHD_p(G).$ We will conclude Section \ref{homeomorphicproof} by proving
\begin{Thm}\label{bijection}
Let $G$ and $H$ be finitely generated, infinite groups. If $\phi$ is a rough isometry from $G$ to $H$, then there is a bijection from $BHD_p(G)$ to $BHD_p(H)$.
\end{Thm}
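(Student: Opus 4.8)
My plan is to factor the desired bijection through the $p$-harmonic boundary, using the homeomorphism supplied by Theorem~\ref{homeomorphic}. The guiding principle is that a bounded $p$-harmonic function ought to be completely determined by, and in bijective correspondence with, the restriction of its Gelfand transform to $\partial_p(G)$. Since $\partial_p(G)$ is defined by vanishing conditions it is a closed subset of the compact Hausdorff space $Sp(BD_p(G))$, hence itself compact Hausdorff, so $C(\partial_p(G))$ is a well-behaved space of continuous functions. The key technical input, which I would establish and expect to be available among the results of Section~\ref{pharmbound}, is that the restriction map
$$ R_G \colon BHD_p(G) \longrightarrow C(\partial_p(G)), \qquad R_G(f) = \hat{f}\mid_{\partial_p(G)}, $$
is a bijection, and likewise that $R_H \colon BHD_p(H) \to C(\partial_p(H))$ is a bijection.

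For the injectivity of $R_G$ I would argue through the $p$-energy rather than by subtracting solutions, since $BHD_p(G)$ is not a vector space when $p \neq 2$. If $f_1, f_2 \in BHD_p(G)$ have Gelfand transforms agreeing on $\partial_p(G)$, then a hull--kernel argument should force $f_1 - f_2 \in B(\overline{\mathbb{C}G})_{D_p}$, so that $f_1$ and $f_2$ lie in a common coset of $(\overline{\mathbb{C}G})_{D_p}$; since each is $p$-harmonic, each is the energy minimizer of that coset, and the strict convexity of $\parallel \cdot \parallel^p$ for $1 < p < \infty$ forces $f_1 = f_2$. For the surjectivity of $R_G$, I would use that the traces $\hat{F}\mid_{\partial_p(G)}$ with $F \in BD_p(G)$ are dense in $C(\partial_p(G))$ (a Stone--Weierstrass argument on the compact, self-adjoint, point-separating algebra of such traces) and that replacing $F$ by its $p$-harmonic part---the energy minimizer over $F + (\overline{\mathbb{C}G})_{D_p}$---preserves the boundary trace, because elements of $B(\overline{\mathbb{C}G})_{D_p}$ vanish on $\partial_p(G)$ by definition and a maximum principle keeps the minimizer bounded; the remaining point is to upgrade this density to genuine solvability.

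With $R_G$ and $R_H$ in hand, let $\Phi \colon \partial_p(G) \to \partial_p(H)$ be the homeomorphism provided by Theorem~\ref{homeomorphic}. It induces a bijection $\Phi^{\ast} \colon C(\partial_p(H)) \to C(\partial_p(G))$ by $\Phi^{\ast}(v) = v \circ \Phi$, whose inverse is $w \mapsto w \circ \Phi^{-1}$. The composite
$$ R_H^{-1} \circ (\Phi^{\ast})^{-1} \circ R_G \colon BHD_p(G) \longrightarrow BHD_p(H) $$
is then a bijection, being a composition of bijections; explicitly it sends $f \in BHD_p(G)$ to the unique $h \in BHD_p(H)$ satisfying $\hat{h}(\Phi(x)) = \hat{f}(x)$ for all $x \in \partial_p(G)$.

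The step I expect to be the main obstacle is the bijectivity of $R_G$, and in particular its surjectivity: this amounts to solving a Dirichlet-type problem on $G$ with prescribed continuous data on $\partial_p(G)$ and verifying that the energy-minimizing solution has exactly the prescribed boundary trace. The injectivity is also genuinely nontrivial for $p \neq 2$, precisely because the loss of linearity forces one to replace the usual ``subtract and apply the maximum principle'' argument by convexity of the energy together with the identification of equal boundary values with a single coset modulo $B(\overline{\mathbb{C}G})_{D_p}$. Once the correspondence $BHD_p(G) \cong C(\partial_p(G))$ is in place, Theorem~\ref{bijection} follows immediately from the composition displayed above.
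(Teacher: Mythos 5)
Your factorization through $C(\partial_p(G))$ has a genuine gap at exactly the point you flag: the surjectivity of the restriction map $R_G \colon BHD_p(G) \to C(\partial_p(G))$. What Stone--Weierstrass gives (traces of $BD_p(G)$-functions are dense in $C(\partial_p(G))$, and each such trace equals the trace of its $p$-harmonic part, by Theorem \ref{vanishbound} and Proposition \ref{decomp}) is only \emph{density} of $R_G(BHD_p(G))$, and neither the paper nor your sketch contains a mechanism to upgrade density to solvability. A uniformly convergent sequence of boundary traces does not control the Dirichlet energies of the corresponding $p$-harmonic functions, so even if a limit function can be extracted, it may fail to lie in $D_p(G)$; worse, for $p \neq 2$ the differences $h_n - h_m$ of $p$-harmonic functions are not $p$-harmonic, so the uniform Cauchy estimate one would want from a maximum principle is not available either. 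Solvability of this Dirichlet problem at the $p$-harmonic boundary is strictly stronger than anything proved in the paper (already for $p=2$ on the Royden boundary, continuous data yields a bounded harmonic solution which in general need not have finite Dirichlet integral), and the theorem does not require it.

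The paper's proof avoids identifying the image of $R_G$ altogether. With $\psi$ a rough inverse of $\phi$, it defines $\Phi(h) = \pi(h \circ \psi)$, where $h \circ \psi \in BD_p(H)$ by Lemma \ref{adjoint} and $\pi$ is the projection onto the $p$-harmonic part supplied by Proposition \ref{decomp}; Theorem \ref{vanishbound} shows that applying $\pi$ does not change the trace on $\partial_p(H)$, and Lemma \ref{ontolemma} shows that the trace of $h \circ \psi$ at $\bar{\phi}(x)$ equals $\hat{h}(x)$, where $\bar{\phi}$ is the boundary homeomorphism (your $\Phi$). Injectivity of this map then follows from Proposition \ref{boundval} --- which is precisely the injectivity of your $R$, the half that \emph{is} available --- and surjectivity is obtained constructively: given $v \in BHD_p(H)$, the function $\pi(v \circ \phi) \in BHD_p(G)$ is an explicit preimage, because its image has the same trace as $v$ on $\partial_p(H)$ and Proposition \ref{boundval} applies again. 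In short: where you try to realize \emph{every} element of $C(\partial_p(G))$ as a harmonic trace, the paper only needs to realize pullbacks of harmonic traces, and those are handled by Lemma \ref{adjoint} together with the Royden-type decomposition. Your argument would become correct if you replaced ``$R_G$ and $R_H$ are bijections onto $C(\partial_p(\cdot))$'' by ``$R_G$ and $R_H$ are injections whose images correspond under composition with $\bar{\phi}$,'' and that correspondence of images is exactly what the pullback-and-project construction establishes.
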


Let $E$ be a normed space of functions on $G$. Let $f \in E$ and let $x \in G$. The right translation of $f$ by $x$, denoted by $f_x$, is the function $f_x(g) = f(gx^{-1})$. Assume that if $f \in E$ then $f_x \in E$ for all $x\in G$; that is, that $E$ is right translation invariant. For the rest of this paper translation invariant will mean right translation invariant. We shall say that $T$ is a translation invariant linear functional (TILF) on $E$ if $T(f_x) = T(f)$ for $f \in E$ and $x \in G$. We will use TILFs to denote translation invariant linear functionals. A common question to ask is that if $T$ is a TILF on $E$, then is $T$ continuous? For background about the problem of automatic continuity see \cite{ Meisters, Saeki, Willis2, Woodward}. Define
$$ \mbox{Diff}(E) := \mbox{ linear span}\{f_x - f \mid f \in E, x \in G \}. $$
It is clear that $\mbox{Diff}(E)$ is contained in the kernel of any TILF on $E$. In Section \ref{tilf} we study TILFs on $D_p(G)/\mathbb{C}$. In particular we prove

\begin{Thm} \label{chartilf} 
Let $G$ be a finitely generated infinite group and let $1 < p \in \mathbb{R}$. Then $\#(\partial_p(G)) > 1$ if and only if there exists a nonzero continuous TILF on $D_p(G)/\mathbb{C}.$
\end{Thm}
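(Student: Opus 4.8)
The plan is to combine the earlier characterization (Theorem \ref{charvanishing}) with a Hahn--Banach argument, reducing everything to a concrete description of the closure of $\mbox{Diff}(D_p(G)/\mathbb{C})$. By Theorem \ref{charvanishing} it suffices to prove that a nonzero continuous TILF on $D_p(G)/\mathbb{C}$ exists if and only if $\bar{H}^1_{(p)}(G) \neq 0$, that is, if and only if $(\overline{\ell^p(G) \oplus \mathbb{C}})_{D_p} \neq D_p(G)$.

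The first step is a Hahn--Banach reduction. Since $\mbox{Diff}(D_p(G)/\mathbb{C})$ lies in the kernel of every TILF, a nonzero continuous TILF exists precisely when $\mbox{Diff}(D_p(G)/\mathbb{C})$ fails to be dense in $D_p(G)/\mathbb{C}$: if the closure is a proper subspace, Hahn--Banach produces a nonzero continuous functional vanishing on it, and such a functional is automatically translation invariant; conversely, the kernel of a nonzero continuous TILF is a proper closed subspace containing $\mbox{Diff}$, so $\mbox{Diff}$ cannot be dense.

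The second step is to identify this closure. I would show that $\overline{\mbox{Diff}(D_p(G)/\mathbb{C})}$ is the image of $(\overline{\ell^p(G)\oplus\mathbb{C}})_{D_p}$ under the quotient map $Q \colon D_p(G) \to D_p(G)/\mathbb{C}$. The key computations are: (a) $\mbox{Diff}(D_p(G)) \subseteq \ell^p(G)$, because for $x \in G$ and $f \in D_p(G)$ the function $g \mapsto f(gx^{-1}) - f(g)$ telescopes along a word for $x^{-1}$ in $S$ into a finite sum of right translates of the $\ell^p$-functions $g \mapsto f(gs^{-1})-f(g)$; (b) $\mbox{Diff}(D_p(G))$ contains every $\delta_{h} - \delta_g$ (take $f = \delta_g$ and $x = g^{-1}h$, so that $(\delta_g)_x = \delta_{gx}$), hence contains the linear span $Z$ of all finitely supported functions with zero sum; (c) for $p > 1$, $Z$ is dense in $\ell^p(G)$ in the $\ell^p$-norm, the approximation being obtained by redistributing the total mass $\sigma$ of a finitely supported function over $N$ additional points, which costs $|\sigma|^p N^{1-p} \to 0$; and (d) the inclusion $\ell^p(G) \hookrightarrow D_p(G)$ is continuous, so the $\ell^p$-density in (c) upgrades to $D_p$-density and $(\overline{\mbox{Diff}(D_p(G))})_{D_p} = (\overline{\ell^p(G)})_{D_p}$. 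Passing to the quotient by the finite-dimensional subspace $\mathbb{C}$, and using that $Q$ is open together with $\mbox{Diff}(D_p(G)/\mathbb{C}) = Q(\mbox{Diff}(D_p(G)))$, then gives $\overline{\mbox{Diff}(D_p(G)/\mathbb{C})} = Q\big((\overline{\ell^p(G)\oplus\mathbb{C}})_{D_p}\big)$.

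Combining the two steps, $\mbox{Diff}(D_p(G)/\mathbb{C})$ is dense exactly when $(\overline{\ell^p(G)\oplus\mathbb{C}})_{D_p} = D_p(G)$, that is, when $\bar{H}^1_{(p)}(G) = 0$; negating and invoking Theorem \ref{charvanishing} yields the theorem. I expect step (c) to be the crux: it is the only place the hypothesis $p > 1$ enters, and it is exactly what forces the closure of the difference space to fill out all of $(\overline{\ell^p(G)})_{D_p}$ rather than a smaller subspace. The remaining care is in the quotient--closure interchange in (d), which I would handle by noting that $\mathbb{C}$ is finite-dimensional, so that $(\overline{\ell^p(G)})_{D_p} + \mathbb{C}$ is already closed and coincides with $(\overline{\ell^p(G)\oplus\mathbb{C}})_{D_p}$.
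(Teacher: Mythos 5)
Your proposal is correct and follows essentially the same route as the paper: a Hahn--Banach reduction (the paper's Theorem \ref{charlptilf}) plus the identification of the closure of $\mbox{Diff}(D_p(G)/\mathbb{C})$ with the closure of $\ell^p(G)$ in the $D(p)$-norm (the paper's Lemma \ref{equality}), all combined with Theorem \ref{charvanishing}. The only real difference is that where the paper cites Woodward's Lemma 1 for the density of $\mbox{Diff}(\ell^p(G))$ in $\ell^p(G)$, you prove that density from scratch via the $\delta_h - \delta_g$ spanning and mass-redistribution argument of your steps (b)--(c), making the argument self-contained.
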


It was shown in \cite{Willis1} that if $G$ is nonamenable, then the only TILF on $\ell^p(G)$ is the zero functional. (Consequently every TILF is automatically continuous!). We will conclude Section \ref{tilf} by showing that this result is not true for $D_p(G)/\mathbb{C}$.

\section{Proof of Theorem \ref{charvanishing} and other results concerning the $p$-harmonic boundary} \label{pharmbound}
In this section we will prove Theorem \ref{charvanishing}. We will conclude this section by describing a base for the topology on $\partial_p(G)$. Before we prove Theorem \ref{charvanishing} we prove some preliminary results concerning $\partial_p(G)$.  We begin with the following lemma that will be needed in the sequel

\begin{Lem} \label{infinitelength}
If $x \in \partial_p(G)$ and $(g_n)$ is a sequence in $G$ that converges to $x$, then $|g_n| \rightarrow \infty$ as $n \rightarrow \infty$.
\end{Lem}
\begin{proof}
Let $x \in \partial_p(G)$ and suppose $(g_n) \rightarrow x$, where $(g_n)$ is a sequence in $G$. Let $B \in \mathbb{R}$ and define a function $\chi_B$ on $G$ by $\chi_B (g) = 1$ if $|g| \leq B$ and $\chi_B (g) = 0$ if $|g| > B$. Since $\chi_B$ has finite support it is an element of $\mathbb{C}G$. Suppose there exists a real number $M$ such that $|g_n| \leq M$ for all $n$. Then $\widehat{\chi_M} (x) = \lim_{n \rightarrow \infty} \chi_M (g_n) = 1$, a contradiction. Thus $|g_n| \rightarrow \infty$ as $n \rightarrow \infty$.
\end{proof}
Denote the constant function that always takes the value 1 on $G$ by $1_G$.

\begin{Prop}\label{noelement}
Let $1 < p \in \mathbb{R}$. Then $\partial_p(G) = \emptyset$ if and only if $1_G \in B(\overline{\mathbb{C}G})_{D_p}$.
\end{Prop}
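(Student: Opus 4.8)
The plan is to recognize $\partial_p(G)$ as the hull (common zero set of Gelfand transforms) of the set $I := B(\overline{\mathbb{C}G})_{D_p}$ inside the spectrum of the commutative unital Banach algebra $BD_p(G)$, whose multiplicative identity is $1_G$, and then invoke the standard correspondence between proper ideals and characters. The first step is to identify $\partial_p(G)$ with the \emph{full} hull $h(I) := \{ x \in Sp(BD_p(G)) \mid \hat{f}(x) = 0 \mbox{ for all } f \in I \}$. Indeed, for each $g \in G$ the function $\delta_g$ lies in $\mathbb{C}G \subseteq I$ and is bounded, while $\widehat{\delta_g}(i(g)) = \delta_g(g) = 1 \neq 0$; hence no point $i(g)$ of $i(G)$ belongs to $h(I)$, so $h(I) \subseteq Sp(BD_p(G)) \setminus i(G)$ and therefore $h(I) = \partial_p(G)$.

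For the implication $1_G \in I \Rightarrow \partial_p(G) = \emptyset$, I would use only that every character $x$ of the unital algebra $BD_p(G)$ satisfies $x(1_G) = 1$ (since $x(1_G) = x(1_G)^2$ and $x \neq 0$). Thus $\widehat{1_G}(x) = 1 \neq 0$ for every $x \in Sp(BD_p(G))$, and because $1_G \in I$ this witnesses $h(I) = \emptyset$, giving $\partial_p(G) = \emptyset$. For the converse I would argue contrapositively: assume $1_G \notin I$. Since $1_G$ is the identity, an ideal containing it would be all of $BD_p(G)$, so $I$ is a \emph{proper} ideal. By Zorn's Lemma $I$ is contained in a maximal ideal $M$, and since $BD_p(G)$ is a commutative unital Banach algebra, $M$ is automatically closed and equals $\ker \tau$ for some character $\tau$. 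Then $\tau(f) = 0$ for all $f \in I$, i.e. $\tau \in h(I) = \partial_p(G)$, so $\partial_p(G) \neq \emptyset$.

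The step I expect to be the main obstacle, and the only one requiring real work, is verifying that $I = B(\overline{\mathbb{C}G})_{D_p}$ is genuinely an ideal of $BD_p(G)$, which the converse direction relies on. That $\mathbb{C}G$ is an ideal is immediate, since multiplying a finitely supported function by any $h \in BD_p(G)$ keeps the support finite. The difficulty is passing to the $D_p$-closure: $D_p(G)$ is not a Banach algebra, and $D_p$-convergence is not uniform, so one cannot simply multiply through a convergent sequence. Given $f \in I$ with approximants $f_n \in \mathbb{C}G$ and $h \in BD_p(G)$, I would show $h f_n \to h f$ in $D_p$ via the Leibniz-type identity $h(x)g(x) - h(y)g(y) = h(x)(g(x)-g(y)) + g(y)(h(x)-h(y))$ applied to $g = f - f_n$. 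The first resulting sum is controlled by $\|h\|_\infty^p \|f - f_n\|^p \to 0$; the second sum, $\sum_{s,g} |g(g)|^p |h(gs^{-1}) - h(g)|^p$, is handled by a dominated-convergence argument, splitting off a finite set of edges carrying all but $\epsilon$ of the finite quantity $\|h\|^p$ and using that $D_p$-convergence forces pointwise convergence $f_n \to f$. After truncating the $f_n$ to keep them uniformly bounded by $\|f\|_\infty$ (truncation is a $D_p$-contraction), this yields $hf \in (\overline{\mathbb{C}G})_{D_p}$, and boundedness of $hf$ places it in $I$. With $I$ established as a proper ideal, the Gelfand-theoretic argument above completes both directions.
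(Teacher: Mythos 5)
Your overall architecture matches the paper's. The converse direction (if $1_G \notin B(\overline{\mathbb{C}G})_{D_p}$ then $\partial_p(G) \neq \emptyset$) is proved in the paper exactly as you propose: $B(\overline{\mathbb{C}G})_{D_p}$ is an ideal, it is proper, a maximal ideal containing it is the kernel of a character, and $\delta_g$ rules out that character lying in $i(G)$. Your forward direction is actually cleaner than the paper's: the paper takes a sequence $(g_n)$ in $G$ converging to $x \in \partial_p(G)$ and derives the contradiction $\widehat{1_G}(x) = \lim_n 1_G(g_n) = 1$, which leans on density of $i(G)$ in $Sp(BD_p(G))$ (and tacitly on extracting convergent sequences in a space that need not be metrizable), whereas you use only the algebraic identity $\tau(1_G)=1$ for every character $\tau$, which needs nothing topological at all.

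There is, however, a genuine gap exactly where you predicted the work would lie: the verification that $B(\overline{\mathbb{C}G})_{D_p}$ is an ideal, which the paper dismisses as ``easy to verify'' and which your converse direction requires. The flawed step is the truncation. Radial truncation $T_M$ at level $M = \parallel f \parallel_\infty$ is indeed pointwise $1$-Lipschitz, so $\parallel T_M f_n \parallel \leq \parallel f_n \parallel$ and $T_M f_n \rightarrow T_M f = f$ pointwise; but neither fact gives $\parallel T_M f_n - f \parallel \rightarrow 0$. The Dirichlet pseudonorm of $T_M f_n - f$ measures edge differences of the \emph{difference}, and no contraction inequality controls those, so ``truncation is a $D_p$-contraction'' does not yield a uniformly bounded approximating sequence that still converges to $f$ in $D_p$ --- which is precisely what your dominated-convergence argument consumes. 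The standard repair: the sequence $(T_M f_n)$ is bounded in the reflexive space $D_p(G)$ and converges pointwise to $f$; since point evaluations are continuous linear functionals on $D_p(G)$ (bound $|u(x)|$ by $|u(e_G)|$ plus a H\"older estimate along a path from $e_G$ to $x$), every weak cluster point of $(T_M f_n)$ equals $f$, hence $T_M f_n \rightarrow f$ weakly; Mazur's lemma then produces convex combinations converging to $f$ in $D_p$-norm, and these are still finitely supported and still bounded by $M$. Feeding that sequence into your Leibniz-identity computation closes the gap, and with the ideal property established the remainder of your proof (the hull identification via $\delta_g$, the identity $\tau(1_G)=1$, and the Zorn--Gelfand argument) is correct.
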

\begin{proof}
Suppose $\partial_p(G) \neq \emptyset$ and let $x \in \partial_p(G)$. Now $\widehat{1_G}(x) = 0$ because $1_G \in B(\overline{\mathbb{C}G})_{D_p}$. Let $(g_n)$ be a sequence in $G$ that converges to $x$ in $Sp(BD_p(G))$. Then $\widehat{1_G} (x) = \lim_{n \rightarrow \infty} \widehat{1_G} (g_n) = 1$, a contradiction. Hence $\partial_p(G) = \emptyset $.

Conversely, suppose that $1_G \notin B(\overline{\mathbb{C}G})_{D_p}$. It is easy to verify that $B(\overline{\mathbb{C}G})_{D_p}$ is an ideal in $BD_p(G)$. Let $M$ be a maximal ideal in $BD_p(G)$ that contains $B(\overline{\mathbb{C} G})_{D_p}$. Thus there exists $x \in Sp(BD_p(G))$ with $\mbox{ker}(x) = M$. So $\hat{f}(x) = x(f) = 0$ for all $f\in B(\overline{\mathbb{C}G})_{D_p}$. For each $g \in G$ there exists $f \in \mathbb{C}G$ (in particular $ \delta_g$) such that $g (f) = f(g) \neq 0$ which means that $x$ cannot be in $G$. Hence $\partial_p(G) \neq \emptyset.$
\end{proof}

For the rest of this paper, unless otherwise stated, we will assume that $1_G \notin B(\overline{\mathbb{C}G})_{D_p}$. 

Let $f$ and $h$ be elements in $BD_p(G)$ and let $1 < p \in \mathbb{R}$. Define
$$ \langle \triangle_p h, f \rangle \colon = \sum_{g \in G} \sum_{s\in S} | h(gs^{-1}) - h(g)|^{p-2} (h(gs^{-1}) - h(g))(\overline{ f(gs^{-1}) - f(g)}),$$
where $\overline{f(gs^{-1}) - f(g)}$ denotes the complex conjugate. The above sum exists since $\sum_{g \in G} \sum_{s \in S} ||h(gs^{-1}) - h(g)|^{p-2} (h(gs^{-1}) - h(g))|^q < \infty$ where $\frac{1}{p} + \frac{1}{q} = 1$. The following propositions will be needed in the sequel. For their proofs see Section 3 of \cite{Puls}, where similar propositions for $D_p(G)$ were proven.

\begin{Prop}\label{diffconst} Let $f_1$ and $f_2$ be functions in $BD_p(G).$ Then $\langle \triangle_p f_1 - \triangle_p f_2, f_1 - f_2 \rangle = 0$ if and only if $f_1(gs^{-1}) - f_1(g) = f_2(gs^{-1}) - f_2(g)$ for all $g \in G$ and for all $s \in S$.
\end{Prop}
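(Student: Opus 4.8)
The plan is to reduce the statement to a pointwise monotonicity inequality for the map $z \mapsto |z|^{p-2}z$ on $\mathbb{C}$. For each pair $(g,s)$ write $a = a(g,s) = f_1(gs^{-1}) - f_1(g)$ and $b = b(g,s) = f_2(gs^{-1}) - f_2(g)$. With this notation the summand of $\langle \triangle_p f_1 - \triangle_p f_2, f_1 - f_2 \rangle$ indexed by $(g,s)$ is exactly $(|a|^{p-2}a - |b|^{p-2}b)\overline{(a-b)}$, and the full sum converges by the H\"older estimate already recorded before the statement. The ``if'' direction is then immediate: if $a(g,s) = b(g,s)$ for every $g$ and every $s$, then every summand vanishes and the pairing is $0$.

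For the ``only if'' direction, the core is the inequality
\[
\mathrm{Re}\big[(|a|^{p-2}a - |b|^{p-2}b)\overline{(a-b)}\big] \geq 0 \qquad (a,b \in \mathbb{C}),
\]
with equality if and only if $a = b$. I would obtain this by viewing $\Phi(z) = |z|^{p-2}z$ as the gradient of the strictly convex function $z \mapsto \tfrac{1}{p}|z|^p$ under the identification $\mathbb{C} \cong \mathbb{R}^2$: strict convexity for $p > 1$ yields strict monotonicity of the gradient, which is precisely the displayed inequality, since the real inner product on $\mathbb{R}^2$ corresponds to $\mathrm{Re}[\,\cdot\,\overline{\,\cdot\,}\,]$. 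The convention that $|z|^{p-2}z = 0$ at $z = 0$ keeps the degenerate cases consistent; for instance, when $a = 0$ the left side reduces to $|b|^p$. Alternatively, one may invoke the standard monotonicity estimates for the $p$-Laplacian (the case $p \geq 2$ giving a lower bound proportional to $|a-b|^p$, and the case $1 < p < 2$ a lower bound proportional to $|a-b|^2/(|a|+|b|)^{2-p}$), exactly as in the arguments of Section 3 of \cite{Puls}.

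Granting the inequality, the conclusion follows quickly. Taking real parts, $\mathrm{Re}\,\langle \triangle_p f_1 - \triangle_p f_2, f_1 - f_2 \rangle$ is a convergent sum of nonnegative terms. If the pairing is $0$, then this sum of nonnegative reals is $0$, so every term vanishes; by the equality case of the inequality this forces $a(g,s) = b(g,s)$ for every $g \in G$ and every $s \in S$, which is precisely the asserted identity $f_1(gs^{-1}) - f_1(g) = f_2(gs^{-1}) - f_2(g)$.

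The main obstacle is the pointwise inequality itself, in particular its strict form over $\mathbb{C}$. The range $1 < p < 2$ is the delicate one, since $\Phi$ is only H\"older rather than Lipschitz near $0$ and the convexity argument must be handled with care at points where $a$ or $b$ equals $0$; this is where the explicit $p$-Laplacian monotonicity estimates, or an equivalent direct computation, do the real work. Everything afterward---passing to real parts and using that a vanishing sum of nonnegative numbers has all terms zero---is routine.
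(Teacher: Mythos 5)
Your proof is correct and takes essentially the same approach as the paper's own source: the paper defers this proposition to Section 3 of \cite{Puls}, where the argument likewise rests on the pointwise strict monotonicity of $z \mapsto |z|^{p-2}z$ (the $p$-Laplacian estimates you cite, split into the cases $p \geq 2$ and $1 < p < 2$). After that, absolute convergence of the pairing (guaranteed by the H\"older bound recorded before the statement) justifies taking real parts termwise, and the equality case of the monotonicity inequality forces $f_1(gs^{-1}) - f_1(g) = f_2(gs^{-1}) - f_2(g)$ for all $g$ and $s$, exactly as you argue.
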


Note that the condition $f_1(gs^{-1}) - f_1(g) = f_2(gs^{-1}) - f_2(g)$ for all $g \in G$ and for all $s \in S$ implies that $(f_1 - f_2)(gs^{-1}) = (f_1 - f_2)(g)$ for all $g \in G$ and for all $s \in S$. Hence $f_1 - f_2$ is constant on $G$ since $S$ generates $G$. Now let $B(\overline{\ell^p(G)})_{D_p}$ be the set of bounded functions in $(\overline{\ell^p(G)})_{D_p}$. The set $B(\overline{\ell^p(G)})_{D_p}$ is closed in $BD_p(G)$ with respect to the $BD_p(G)$-norm for the same reason that $B(\overline{\mathbb{C}G})_{D_p}$ is closed in $BD_p(G)$.

\begin{Prop}\label{decomp}
Let $1 < p \in \mathbb{R}$ and suppose $B(\overline{\ell^p(G)})_{D_p} \neq BD_p(G)$. If $f \in BD_p(G)$, then we can write $f = u + h$, where $u \in B(\overline{\ell^p(G)})_{D_p}$ and $h \in BHD_p(G).$ This decomposition is unique up to a constant function.
\end{Prop}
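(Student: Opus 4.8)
The plan is to realize $h$ as the minimizer of the $p$-Dirichlet energy in the affine class $f + (\overline{\ell^p(G)})_{D_p}$, to show this minimizer is $p$-harmonic and bounded, and finally to deduce uniqueness from Proposition \ref{diffconst}. For the existence and uniqueness of the minimizer I would first pass to the gradient. Consider the linear map $\nabla\colon D_p(G) \to \ell^p(G\times S)$ given by $\nabla v(g,s) = v(gs^{-1}) - v(g)$; by construction $\|\nabla v\|_{\ell^p(G\times S)} = \|v\|$, and $\ker \nabla$ is exactly the constants since $S$ generates $G$. Thus $\nabla$ induces an isometry of the Banach space $D_p(G)/\mathbb{C}$ onto a closed subspace of $\ell^p(G\times S)$. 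The standing assumption $1_G \notin B(\overline{\mathbb{C}G})_{D_p} = B(\overline{\ell^p(G)})_{D_p}$ (these closures coincide because $\mathbb{C}G$ is dense in $\ell^p(G)$) forces $(\overline{\ell^p(G)})_{D_p}$ to meet the constants only at $0$, so $\nabla\!\big((\overline{\ell^p(G)})_{D_p}\big)$ is again closed and $\nabla\big(f + (\overline{\ell^p(G)})_{D_p}\big)$ is a nonempty closed affine subset of $\ell^p(G\times S)$. Since $\ell^p(G\times S)$ is uniformly convex for $1 < p < \infty$, it contains a unique element of minimal norm; pulling this back produces a function $h \in f + (\overline{\ell^p(G)})_{D_p}$ with $\|h\|$ minimal, and any two such functions differ by a constant lying in $(\overline{\ell^p(G)})_{D_p}$, hence agree. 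Write $u = f - h \in (\overline{\ell^p(G)})_{D_p}$.

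To see that the minimizer is $p$-harmonic, note that for $w \in (\overline{\ell^p(G)})_{D_p}$ the function $t \mapsto \|h + tw\|^p$ has a minimum at $t = 0$; since $|z|^p$ is $C^1$ for $p > 1$, differentiating under the absolutely convergent sum gives $\mathrm{Re}\,\langle \triangle_p h, w\rangle = 0$, and replacing $w$ by $iw$ yields $\langle \triangle_p h, w\rangle = 0$ for every such $w$. A summation by parts, using that $S$ is symmetric, shows $\langle \triangle_p h, w\rangle = -2\sum_{g}\triangle_p h(g)\,\overline{w(g)}$ for finitely supported $w$; taking $w = \delta_g$ gives $\triangle_p h(g) = 0$ for all $g$, so $h \in HD_p(G)$.

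The boundedness of $h$ is the step I expect to be the main obstacle. Let $M = \|f\|_\infty$ and let $T_M$ be the radial truncation of $\mathbb{C}$ onto the closed disk of radius $M$, a $1$-Lipschitz map fixing every point of that disk; put $h^\ast = T_M \circ h$. Since $T_M$ is $1$-Lipschitz we have $|\nabla h^\ast| \le |\nabla h|$ pointwise, so $\|h^\ast\| \le \|h\|$; the difficulty is to check $h^\ast \in f + (\overline{\ell^p(G)})_{D_p}$, equivalently $h - h^\ast \in (\overline{\ell^p(G)})_{D_p}$. I would choose $w_n \in \mathbb{C}G$ with $w_n \to u$ in $D_p(G)$ and set $h_n = f - w_n$, so that $h_n = f$ off a finite set and hence $h_n - T_M h_n \in \mathbb{C}G$ (because $T_M$ fixes the values of $f$). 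As $h_n \to h$ in $D_p(G)$ we have $h_n \to h$ pointwise, so $T_M h_n \to T_M h$ pointwise and $\nabla(T_M h_n) \to \nabla(T_M h)$ pointwise; combined with the domination $|\nabla(T_M h_n)| \le |\nabla h_n|$ and $\nabla h_n \to \nabla h$ in $\ell^p(G\times S)$, the generalized dominated convergence theorem gives $T_M h_n \to T_M h$ in $D_p(G)$. Hence $h - h^\ast = \lim_n (h_n - T_M h_n) \in (\overline{\mathbb{C}G})_{D_p} = (\overline{\ell^p(G)})_{D_p}$, so $h^\ast$ also lies in the affine class with no larger energy, and the uniqueness of the minimizer forces $h^\ast = h$; that is, $|h| \le M$. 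Consequently $h \in BHD_p(G)$ and $u = f - h \in B(\overline{\ell^p(G)})_{D_p}$, giving the decomposition $f = u + h$.

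Finally, for uniqueness up to a constant, suppose $f = u_1 + h_1 = u_2 + h_2$ with $u_i \in B(\overline{\ell^p(G)})_{D_p}$ and $h_i \in BHD_p(G)$. Then $h_1 - h_2 = u_2 - u_1 \in (\overline{\ell^p(G)})_{D_p}$. Because each $h_i$ is $p$-harmonic, the summation-by-parts identity gives $\langle \triangle_p h_i, w\rangle = 0$ for $w \in \mathbb{C}G$, and the continuity of $\langle \triangle_p h_i, \cdot\rangle$ in its second argument (Hölder's inequality, since the coordinates of $\triangle_p h_i$ are $\ell^q$-summable) together with the density of $\mathbb{C}G$ extends this to $\langle \triangle_p h_i, h_1 - h_2\rangle = 0$. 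Subtracting yields $\langle \triangle_p h_1 - \triangle_p h_2, h_1 - h_2\rangle = 0$, and Proposition \ref{diffconst} forces $h_1 - h_2$ to be constant, as required.
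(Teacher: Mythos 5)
Your proof is correct, but there is no line-by-line comparison to make: the paper does not prove Proposition \ref{decomp} at all, instead referring the reader to Section 3 of \cite{Puls}, where ``similar propositions for $D_p(G)$ were proven.'' Your argument is the standard Royden-type variational proof, and it is essentially the route one expects behind that citation for the unbounded case: embed $D_p(G)/\mathbb{C}$ isometrically into $\ell^p(G\times S)$ via the gradient, use uniform convexity to minimize the energy over the closed affine coset $f + (\overline{\ell^p(G)})_{D_p}$, and identify the minimizer as $p$-harmonic by first variation together with summation by parts. What your write-up genuinely adds---and what the citation does not literally cover, since it concerns $D_p(G)$ rather than $BD_p(G)$---is the boundedness of the harmonic part when $f$ is bounded: your truncation argument (the radial truncation $T_M$, the observation that $h_n - T_M h_n \in \mathbb{C}G$ for the approximants $h_n = f - w_n$, and the generalized dominated convergence theorem to pass to the limit) supplies exactly the missing step, and is the part of the proposition that requires a new idea. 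Your uniqueness argument via Proposition \ref{diffconst} coincides with how the paper itself argues in Proposition \ref{boundval}. One small repair: the closedness of $\nabla\bigl((\overline{\ell^p(G)})_{D_p}\bigr)$ in $\ell^p(G\times S)$ does not follow from the intersection condition $(\overline{\ell^p(G)})_{D_p} \cap \mathbb{C} = \{0\}$, as your ``so'' suggests; the right justification is that the sum of a closed subspace and a finite-dimensional subspace is closed, so the image of $(\overline{\ell^p(G)})_{D_p}$ in the Banach space $D_p(G)/\mathbb{C}$ is closed and the isometry $\nabla$ carries it to a closed set. The intersection condition is instead what you need---and correctly invoke---to conclude that the pulled-back minimizer $h$ is unique in $f + (\overline{\ell^p(G)})_{D_p}$, not merely unique modulo constants.
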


Recall that the $\ell^p$-norm on $\ell^p(G)$ is given by $\parallel f \parallel_p^p = \sum_{g \in G} |f(g)|^p$, where $f \in \ell^p(G)$. We are now ready to prove

\begin{Thm} \label{vanishbound} Let $f \in BD_p(G)$. Then $f \in B(\overline{\ell^p(G)})_{D_p}$ if and only if $\hat{f}(x) = 0$ for all $x \in \partial_p(G).$
\end{Thm}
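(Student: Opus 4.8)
The plan is to prove the two implications separately, pushing all the substance into a statement about bounded $p$-harmonic functions. I would first record the identification $(\overline{\ell^p(G)})_{D_p}=(\overline{\mathbb{C}G})_{D_p}$, and hence $B(\overline{\ell^p(G)})_{D_p}=B(\overline{\mathbb{C}G})_{D_p}$. This holds because $\mathbb{C}G$ is dense in $\ell^p(G)$ in the $\ell^p$-norm and, from the estimate $\|v\|_{D_p}^p\le(2^p\#(S)+1)\|v\|_p^p$, convergence in $\ell^p(G)$ implies convergence in $D_p(G)$; thus $\ell^p(G)\subseteq(\overline{\mathbb{C}G})_{D_p}$ and the two closures coincide. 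With this identification the forward implication is immediate: if $f\in B(\overline{\ell^p(G)})_{D_p}=B(\overline{\mathbb{C}G})_{D_p}$, then $\hat f(x)=0$ for every $x\in\partial_p(G)$ by the very definition of $\partial_p(G)$.

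For the converse, suppose $\hat f(x)=0$ on $\partial_p(G)$. Since $i(G)$ is dense in $Sp(BD_p(G))$ by Proposition \ref{dense} and $\hat f$ is continuous, the Gelfand transform of any real-valued element of $BD_p(G)$ is real-valued on all of $Sp(BD_p(G))$; hence $\widehat{\mathrm{Re}\,f}=\mathrm{Re}\,\hat f$ and $\widehat{\mathrm{Im}\,f}=\mathrm{Im}\,\hat f$, and both $\mathrm{Re}\,f,\mathrm{Im}\,f\in BD_p(G)$ have transforms vanishing on $\partial_p(G)$. So it suffices to treat real $f$. Because $1_G\notin B(\overline{\mathbb{C}G})_{D_p}$ we have $B(\overline{\ell^p(G)})_{D_p}\neq BD_p(G)$, so Proposition \ref{decomp} gives a real decomposition $f=u+h$ with $u\in B(\overline{\ell^p(G)})_{D_p}$ and $h\in BHD_p(G)$. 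By the forward implication $\hat u=0$ on $\partial_p(G)$, whence $\hat h=\hat f-\hat u=0$ on $\partial_p(G)$. Thus the theorem reduces to showing that a real $h\in BHD_p(G)$ with $\hat h|_{\partial_p(G)}=0$ is identically $0$.

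For the maximum-principle step I would use, for real $s$, the truncation $h_s=(h-s)^+\in BD_p(G)$, noting $\widehat{h_s}=(\hat h-s)^+$. The elementary pointwise inequality $(a-b)\,|a-b|^{p-2}\big((a-s)^+-(b-s)^+\big)\ge\big|(a-s)^+-(b-s)^+\big|^p$, verified by cases and summed over edges, yields $\langle\triangle_p h,h_s\rangle\ge\|h_s\|^p$. On the other hand, since $h$ is $p$-harmonic, summation by parts gives $\langle\triangle_p h,\varphi\rangle=0$ for $\varphi\in\mathbb{C}G$, and then, by continuity of $\varphi\mapsto\langle\triangle_p h,\varphi\rangle$ in the $D_p$-norm, for all $\varphi\in B(\overline{\mathbb{C}G})_{D_p}$. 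Taking $s=0$, so that $\widehat{h_0}=(\hat h)^+=0$ on $\partial_p(G)$, if I can show $h_0\in B(\overline{\mathbb{C}G})_{D_p}$ then $\langle\triangle_p h,h_0\rangle=0$, forcing $\|h_0\|=0$; hence $(h)^+$ is constant, and since $\widehat{h_0}$ vanishes at a point of the nonempty set $\partial_p(G)$ this constant is $0$, i.e. $h\le 0$. Applying the same argument to $-h$ gives $h\ge 0$, so $h=0$ and $f=u\in B(\overline{\ell^p(G)})_{D_p}$.

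The hard part will be exactly the claim that the bounded, nonnegative, finite-energy function $h_0$, whose Gelfand transform vanishes on $\partial_p(G)$, actually lies in the $D_p$-closed ideal $B(\overline{\mathbb{C}G})_{D_p}$. Vanishing on $\partial_p(G)=\mathrm{hull}\big(B(\overline{\mathbb{C}G})_{D_p}\big)$ together with the Stone--Weierstrass theorem only yields approximation by elements of $B(\overline{\mathbb{C}G})_{D_p}$ in the sup-norm, whereas membership in $B(\overline{\mathbb{C}G})_{D_p}$ is a condition in the strictly finer $BD_p$-norm, which also controls the first differences; this is the gap that Stone--Weierstrass cannot close. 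I expect to bridge it by a domination argument: using compactness of $\{x:\widehat{h_0}(x)\ge\epsilon\}$ and its disjointness from $\partial_p(G)$, produce $\Psi\in B(\overline{\mathbb{C}G})_{D_p}$ with $\hat\Psi\ge c>0$ on that set and $\hat\Psi\ge 0$ throughout, so that $(h_0-\epsilon)^+$ is pointwise dominated by the genuine potential $c^{-1}(h_0-\epsilon)^+\Psi$, and then show that a finite-energy function dominated by a potential is itself a potential, finally letting $\epsilon\downarrow 0$. Turning this into an honest $D_p$-norm (energy) estimate, as opposed to the routine pointwise or sup-norm bounds, is the technical heart of the argument.
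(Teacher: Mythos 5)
Your forward implication, the identification $B(\overline{\ell^p(G)})_{D_p}=B(\overline{\mathbb{C}G})_{D_p}$, and the reduction via Proposition \ref{decomp} to the statement ``a real $h\in BHD_p(G)$ with $\hat h=0$ on $\partial_p(G)$ must vanish'' all match the paper, and your truncation inequality $\langle\triangle_p h,h_s\rangle\ge\;\parallel h_s\parallel^p$ and the extension of $\langle\triangle_p h,\cdot\rangle$ from $\mathbb{C}G$ to $(\overline{\mathbb{C}G})_{D_p}$ by H\"older and density are sound. The gap is exactly the step you flag yourself, and it is fatal as written: the claim that $h_0=h^+$ --- a bounded, nonnegative element of $BD_p(G)$ with $\widehat{h_0}=0$ on $\partial_p(G)$ --- lies in $B(\overline{\mathbb{C}G})_{D_p}$ is not a lemma on the way to the theorem; it \emph{is} the hard implication of the theorem, applied to $h^+$ in place of $f$. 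Reducing the statement for $f$ to the identical statement for $h^+$ is circular unless that instance is proved by independent means. Your proposed bridge (dominate $(h_0-\epsilon)^+$ pointwise by $c^{-1}(h_0-\epsilon)^+\Psi$ with $\Psi$ in the ideal, invoke ``a finite-energy function dominated by a potential is itself a potential'', then let $\epsilon\downarrow 0$) leaves that domination lemma unproved, and it is not a routine estimate: for $p\neq 2$ it is a substantive result of nonlinear network potential theory, requiring obstacle-problem/exhaustion constructions of the same order as the machinery the paper imports from \cite{H2} in the proof of Proposition \ref{base}. So the proposal is a program, not a proof.

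The paper closes the argument instead with a short maximum-principle computation carried out directly on $G$, needing no potential theory: after the same decomposition $f=u+h$, it observes that $|h|$ attains its maximum over $G$ (compactness of $Sp(BD_p(G))$, density of $i(G)$, continuity of $\hat h$), and at a maximizing vertex $g$ the equation $\triangle_p h(g)=0$, rewritten as $\sum_{s\in S}|h(gs^{-1})-h(g)|^{p-2}h(gs^{-1})=\sum_{s\in S}|h(gs^{-1})-h(g)|^{p-2}h(g)$, forces $h(gs^{-1})=h(g)$ for every $s\in S$; since $S$ generates $G$, $h$ is constant, and the constant is $0$ because $\hat h$ vanishes on $\partial_p(G)$, which is nonempty by Proposition \ref{noelement} under the standing assumption $1_G\notin B(\overline{\mathbb{C}G})_{D_p}$. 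Replacing your entire third step by this maximum argument completes the proof and renders the truncation machinery unnecessary.
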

\begin{proof}
Since $\mathbb{C}G$ is dense in $\ell^p(G)$ in the $\ell^p$-norm, it follows that $\ell^p(G) \subseteq (\overline{\mathbb{C}G})_{D_p}$. Thus $B(\overline{\ell^p(G)})_{D_p} = B(\overline{\mathbb{C}G})_{D_P}$. There exists a sequence $(f_n)$ in $\mathbb{C}G$ such that $\parallel f - f_n \parallel_{BD_p} \rightarrow 0$ because $B(\overline{\ell^p(G)})_{D_p}$ is closed in $BD_p(G)$ with respect to the $BD_p(G)$-norm. Due to the density of $G$ in $Sp(BD_p(G))$ we see that $\widehat{f_n} \rightarrow \hat{f}$  in $C(Sp(BD_p(G)))$ with respect to the sup-norm. So if $x \in \partial_p(G)$, then $\hat{f}(x) = 0$ since $\widehat{f_n}(x) = 0$ for all $n$.

Conversely, suppose $\hat{f}(x) = 0$ for all $x \in \partial_p (G)$. By Proposition \ref{decomp} we can write $f = u + h$, where $u \in B(\overline{\ell^p(G)})_{D_p}$ and $h \in BHD_p(G)$. So $\hat{h}(x) = 0$ for all $x \in \partial_p(G)$ since $\hat{u}(x) = 0.$ Now $|h(g)|$ attains its maximum value on $G$ since $|\hat{h}(g)|$ attains its maximum value on $Sp(BD_p(G))$ and $G$ is dense in $Sp(BD_p(G))$. Let $g \in G$ with $|h(g)| \geq |h(a)|$ for all $a \in G$. It follows $h(g) = h(gs^{-1})$ for all $s \in S$ since $\sum_{s \in S} |h(gs^{-1}) - h(g)|^{p-2} h(gs^{-1}) = \sum_{s \in S} |h (gs^{-1}) - h(g) |^{p-2} h(g)$. We now obtain $h(g) = h(a)$ for all $a \in G$ since $S$ generates $G$. Therefore $h =0$ and consequently $f = u \in B(\overline{\ell^p(G)})_{D_p}.$
\end{proof}

We now show that a function in $BHD_p(G)$ is uniquely determined by its values on $\partial_p(G)$.

\begin{Prop} \label{boundval}
 A function in $BHD_p(G)$ is uniquely determined by its values on $\partial_p(G)$.
\end{Prop}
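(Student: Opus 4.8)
The plan is to show that if two functions $h_1, h_2 \in BHD_p(G)$ agree on $\partial_p(G)$, then they are equal. Consider the difference $f = h_1 - h_2$. Since $\hat{h_1}(x) = \hat{h_2}(x)$ for all $x \in \partial_p(G)$, and the Gelfand transform is linear, we have $\hat{f}(x) = 0$ for all $x \in \partial_p(G)$. Now $f$ is a bounded function in $D_p(G)$, so $f \in BD_p(G)$, which puts us in a position to apply Theorem \ref{vanishbound}: that theorem tells us precisely that $\hat{f}(x) = 0$ on $\partial_p(G)$ is equivalent to $f \in B(\overline{\ell^p(G)})_{D_p}$. Hence $f \in B(\overline{\ell^p(G)})_{D_p}$.

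The next step is to exploit the harmonicity. The difference of two $p$-harmonic functions need not be $p$-harmonic (because $\triangle_p$ is nonlinear for $p \neq 2$), so I cannot directly say $f \in BHD_p(G)$. Instead I would invoke the uniqueness in the decomposition of Proposition \ref{decomp}. Writing $f = h_1 - h_2$, I can regard this as a decomposition of $f$ itself: applying Proposition \ref{decomp} to $f \in B(\overline{\ell^p(G)})_{D_p} \subseteq BD_p(G)$, one decomposition is $f = f + 0$ with the $\ell^p$-part equal to $f$ and the harmonic part equal to $0$. But $f = h_1 - h_2$ also expresses $f$ as a difference involving harmonic functions. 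The cleanest route is to observe that $f = h_1 - h_2 \in B(\overline{\ell^p(G)})_{D_p}$ means $h_1 = h_2 + f$ where $f$ lies in the $\ell^p$-closure; comparing this with the trivial decomposition $h_1 = 0 + h_1$ (with harmonic part $h_1$) and using the uniqueness-up-to-constant of Proposition \ref{decomp}, we conclude $h_1$ and $h_2$ differ by a constant.

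To pin down that the constant is zero, I would return to the boundary values. Since $h_1 - h_2$ is constant, say equal to $c$, its Gelfand transform is the constant $c$ on all of $Sp(BD_p(G))$, in particular on $\partial_p(G)$. But we assumed $\hat{h_1}(x) = \hat{h_2}(x)$ on $\partial_p(G)$, forcing $c = 0$, provided $\partial_p(G) \neq \emptyset$. The standing assumption that $1_G \notin B(\overline{\mathbb{C}G})_{D_p}$ guarantees via Proposition \ref{noelement} that $\partial_p(G) \neq \emptyset$, so there is at least one boundary point at which to evaluate, and the argument closes.

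The main obstacle I anticipate is the nonlinearity of $\triangle_p$: the temptation is to treat $h_1 - h_2$ as harmonic and quote the maximum-principle argument from Theorem \ref{vanishbound} directly, but that is invalid for $p \neq 2$. The correct handle is the uniqueness clause in Proposition \ref{decomp}, and the delicate point is arguing that the $\ell^p$-closure membership of $h_1 - h_2$ together with the two competing decompositions forces the harmonic parts to coincide up to a constant. I would be careful to phrase the uniqueness step so that it genuinely uses the structure of Proposition \ref{decomp} rather than an illegitimate linearity of the $p$-Laplacian.
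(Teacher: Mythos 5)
Your proof is correct, but its key middle step takes a genuinely different route from the paper's. Both arguments begin and end the same way: the difference $h_1 - h_2$ lies in $BD_p(G)$ and has vanishing Gelfand transform on $\partial_p(G)$, so Theorem \ref{vanishbound} places it in $B(\overline{\ell^p(G)})_{D_p}$; and once constancy of $h_1 - h_2$ is known, evaluating at a point of the nonempty boundary forces the constant to be zero. Where you diverge is in how constancy is obtained. The paper works with the nonlinear pairing: it takes a sequence $(f_n)$ in $\ell^p(G)$ converging to $h_1 - h_2$, uses Proposition 3.4 of \cite{Puls} to get $\langle \triangle_p h_1 - \triangle_p h_2, h_1 - h_2 \rangle = \lim_{n \rightarrow \infty} \langle \triangle_p h_1 - \triangle_p h_2, f_n \rangle = 0$, and then invokes Proposition \ref{diffconst}. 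You instead compare the two decompositions $h_1 = 0 + h_1$ and $h_1 = (h_1 - h_2) + h_2$, both legitimate instances of Proposition \ref{decomp} since $h_1 - h_2 \in B(\overline{\ell^p(G)})_{D_p}$ and $h_1, h_2 \in BHD_p(G)$, and let the uniqueness-up-to-constant clause do the work. This is sound, and it is arguably cleaner at this point in the paper: it never touches the pairing and correctly respects the nonlinearity of $\triangle_p$, which you rightly identify as the trap. What it costs is self-containedness: the uniqueness clause of Proposition \ref{decomp} is itself proved (in the cited Section 3 of \cite{Puls}) by essentially the pairing-plus-Proposition-\ref{diffconst} argument that the paper writes out explicitly, so your proof delegates the real content to that citation rather than exhibiting it. One small point to make explicit, though it is not a gap: Proposition \ref{decomp} carries the hypothesis $B(\overline{\ell^p(G)})_{D_p} \neq BD_p(G)$, which holds here because the standing assumption $1_G \notin B(\overline{\mathbb{C}G})_{D_p}$ together with the identity $B(\overline{\ell^p(G)})_{D_p} = B(\overline{\mathbb{C}G})_{D_p}$ (noted in the proof of Theorem \ref{vanishbound}) keeps $1_G$ outside the $\ell^p$-closure; your closing remark that $\partial_p(G) \neq \emptyset$ is needed for the final evaluation is exactly right and is equally implicit in the paper's own proof.
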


\begin{proof}
Let $h_1$ and $h_2$ be elements of $BHD_p(G)$ with $\widehat{h_1}(x) = \widehat{h_2}(x)$ for all $x \in \partial_p(G).$ Since $\widehat{h_1} - \widehat{h_2} = 0$ on $\partial_p(G)$ and $h_1 - h_2 \in BD_p(G), h_1 - h_2 \in B(\overline{\ell^p(G)})_{D_p}$ by Theorem \ref{vanishbound}. So there exists a sequence $(f_n)$ in $\ell^p(G)$ that converges to $h_1 - h_2$. Using Proposition 3.4 of \cite{Puls} we obtain $\langle \triangle_p h_1 - \triangle_p h_2, h_1 - h_2 \rangle = \lim_{n \rightarrow \infty} \langle \triangle_ph_1 - \triangle_p h_2, f_n \rangle = 0$. It now follows from Proposition \ref{diffconst} that $h_1-h_2$ is constant on $G$. Hence, $h_1 - h_2 = 0$ since $\widehat{h_1}(x) - \widehat{h_2}(x) = 0$ for all $x \in \partial_p(G)$.
\end{proof}

We now give a characterization of when $BHD_p(G)$ is precisely the constant functions in terms of the cardinality of $\partial_p(G)$.

\begin{Thm} \label{plious}
 Let $1 < p \in \mathbb{R}$. Then $BHD_p(G) \neq \mathbb{C}$ if and only if $\#(\partial_p(G)) > 1$.
\end{Thm}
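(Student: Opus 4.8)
The plan is to prove both implications using the separation properties of the Gelfand transform together with the decomposition in Proposition~\ref{decomp} and the uniqueness statement in Proposition~\ref{boundval}. Throughout I keep the standing assumption $1_G \notin B(\overline{\mathbb{C}G})_{D_p}$, so by Proposition~\ref{noelement} the boundary $\partial_p(G)$ is nonempty; hence the hypothesis $\#(\partial_p(G)) \le 1$ means exactly $\#(\partial_p(G)) = 1$. I also record two elementary remarks to be used repeatedly: adding a constant to a function does not change any of the differences $f(gs^{-1}) - f(g)$, so if $h \in BHD_p(G)$ and $c \in \mathbb{C}$ then $h - c \in BHD_p(G)$; and every character sends $1_G$ to $1$, so the Gelfand transform of a constant function is that same constant everywhere on $Sp(BD_p(G))$.

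For the direction $\#(\partial_p(G)) > 1 \Rightarrow BHD_p(G) \neq \mathbb{C}$, I would first pick distinct points $x_1, x_2 \in \partial_p(G)$. Since $Sp(BD_p(G))$ is compact Hausdorff, Urysohn's Lemma produces a continuous function separating $x_1$ and $x_2$, and by the Stone--Weierstrass density of $BD_p(G)$ in $C(Sp(BD_p(G)))$ I can find $f \in BD_p(G)$ with $\hat f(x_1) \neq \hat f(x_2)$. Because $B(\overline{\ell^p(G)})_{D_p} \neq BD_p(G)$ (since $1_G$ lies in $BD_p(G)$ but not in $B(\overline{\ell^p(G)})_{D_p} = B(\overline{\mathbb{C}G})_{D_p}$), Proposition~\ref{decomp} lets me write $f = u + h$ with $u \in B(\overline{\ell^p(G)})_{D_p}$ and $h \in BHD_p(G)$. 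Theorem~\ref{vanishbound} gives $\hat u(x_1) = \hat u(x_2) = 0$, so $\hat h(x_i) = \hat f(x_i)$ and therefore $\hat h(x_1) \neq \hat h(x_2)$. Since the Gelfand transform of a constant is constant, $h$ cannot be constant, and thus $BHD_p(G) \neq \mathbb{C}$.

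For the converse, assume $\#(\partial_p(G)) = 1$, say $\partial_p(G) = \{x_0\}$, and let $h \in BHD_p(G)$ be arbitrary. Set $c = \hat h(x_0)$. Then $h - c \in BHD_p(G)$ by the remark above, and $\widehat{h-c}(x_0) = c - c = 0$, which agrees with the value at $x_0$ of the zero function (also an element of $BHD_p(G)$). Since both $h - c$ and $0$ lie in $BHD_p(G)$ and agree on all of $\partial_p(G)$, Proposition~\ref{boundval} forces $h - c = 0$, i.e.\ $h = c$ is constant; hence $BHD_p(G) = \mathbb{C}$. The only substantive work is in the first implication, where the obstacle is to manufacture a bounded $p$-harmonic function that actually distinguishes the two boundary points. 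This is resolved precisely by combining the Stone--Weierstrass approximation with the harmonic decomposition: the decomposition discards the $\ell^p$-part, which is invisible on $\partial_p(G)$ by Theorem~\ref{vanishbound}, leaving a harmonic remainder that inherits the separation of $x_1$ from $x_2$.
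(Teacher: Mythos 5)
Your proof is correct and follows essentially the same route as the paper's: the forward direction combines Proposition \ref{decomp} with Theorem \ref{vanishbound} to produce a bounded $p$-harmonic function separating two boundary points, and the converse uses the uniqueness statement of Proposition \ref{boundval} to identify any element of $BHD_p(G)$ with a constant. The only cosmetic differences are that you manufacture the separating element via Urysohn plus Stone--Weierstrass density (the paper simply notes that distinct characters already differ on some element of $BD_p(G)$) and you detect nonconstancy via the Gelfand transform of constants rather than via density of $G$ in $Sp(BD_p(G))$.
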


\begin{proof}
Suppose that $\# (\partial_p(G))  =1$ and that $x \in \partial_p(G)$. Let $h \in BHD_p(G)$. Then $\hat{h}(x) = c$, for some constant $c$. It now follows from Proposition \ref{boundval} that the function $h(g) = c$ for all $g \in G$ is the only function in $BHD_p(G)$ with $\hat{h}(x) = c$. Hence $BHD_p(G) = \mathbb{C}$.

Conversely, suppose $\#(\partial_p(G) ) > 1$. Let $x, y \in \partial_p(G)$ such that $x \neq y$ and pick $f \in BD_p(G)$ that satisfies $x(f) \neq y(f)$. Thus $\hat{f}(x) \neq \hat{f}(y)$. It now follows from Theorem \ref{vanishbound} that $f \notin B(\overline{\ell^p(G)})_{D_p}$. By combining Theorem \ref{decomp} with Theorem \ref{vanishbound} we obtain an $h \in BHD_p(G)$ with $\hat{h}(z) = \hat{f}(z)$ for all $z \in \partial_p(G)$. Since $G$ is dense in $Sp(BD_p(G))$ there exists sequences $(x_n)$ and $(y_n)$ in $G$ such that $(x_n)(h) \rightarrow x(h)$ and $(y_n)(h) \rightarrow y(h)$. Hence $\lim_{n \rightarrow \infty} h(x_n) = x(h) \neq y(h) = \lim_{n \rightarrow \infty} h(y_n)$. Therefore, $h$ is not constant on $G$, that is $BHD_p(G) \neq \mathbb{C}$. 
\end{proof}

We now prove Theorem \ref{charvanishing}. Suppose $\partial_p(G) = \emptyset$. By Proposition \ref{noelement} there exists a sequence $(f_n)$ in $\mathbb{C}G$ with $\parallel f_n -1_G \parallel_{D_p} \rightarrow 0$. It follows that $\parallel f_n \parallel \rightarrow 0$ and $(f_n (e_G)) \not\rightarrow 0$. Thus $\bar{H}^1_{(p)} (G) = 0$ by Theorem 3.2 of \cite{PulsCMB}. We now assume $\partial_p(G) \neq \emptyset$. It was shown in \cite[Theorem 3.5]{Puls} that $\bar{H}^1_{(p)}(G) \neq 0$ if and only if $HD_p(G) \neq \mathbb{C}$. Let $X$ be the Cayley graph of $G$ with respect to the generating set $S$. Each vertex in $X$ has degree $\#(S)$, so by Lemma 4.4 of \cite{H2} we have that $BHD_p(G) = \mathbb{C}$ if and only if $HD_p(G) = \mathbb{C}$. Theorem \ref{charvanishing} now follows from Theorem \ref{plious}. 

We will conclude this section by giving a neighborhood base for the topology on $\partial_p(G)$. Let $A$ be a subset of $G$ and define
$$ \partial A := \{ g \in G\backslash A \mid \mbox{ there exists } s \in S \mbox{ with } gs^{-1} \in A \}. $$
Let $A$ be an infinite connected subset of $G$ with $\partial A \neq \emptyset$. The set $A$ is called a {\em $D_p$-massive subset} if there exists a nonnegative function $u \in BD_p(G)$ that satisfies the following:
\begin{enumerate}
\item  $\triangle_p u (a) = 0 \mbox{ for all } a \in A$
\item  $u(g) = 0 \mbox{ for } g \in \partial A $
\item  $\sup_{ a \in A} u(a) = 1$
\end{enumerate} 

A function $u$ that satisfies the above conditions is called an {\em inner potential} of the $D_p$-massive subset $A$. For a $D_p$-massive subset $A$ of $G$ define $\hat{A}_u = \{ \tau \in Sp(BD_p(G)) \mid \hat{u}(\tau) > 0 \}$, where $u$ is an inner potential of $A$ that satisfies $u(g) = 0$ for all $g \in G\setminus A$. Set
$$ {\mathcal B} = \{ \hat{A}_u \cap \partial_p(G) \mid A \mbox{ is a }D_p\mbox{-massive subset of }G \}.$$
We now show that ${\mathcal B}$ is a base for the topology on $\partial_p(G)$.

\begin{Prop} \label{base}
The set ${\mathcal B}$ is a base for the topology on $\partial_p(G)$.
\end{Prop}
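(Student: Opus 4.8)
The plan is to show that $\mathcal{B}$ is a base for the topology on $\partial_p(G)$ by verifying the two standard requirements: first, that every point of $\partial_p(G)$ lies in some member of $\mathcal{B}$; and second, that given any open set $U \subseteq \partial_p(G)$ and any point $x \in U$, there is a member of $\mathcal{B}$ containing $x$ and contained in $U$. Since $\partial_p(G)$ inherits its topology as a subspace of the compact Hausdorff space $Sp(BD_p(G))$, whose topology comes from the Gelfand transforms $\hat{f}$ with $f \in BD_p(G)$, a subbasic open set around $x$ has the form $\{\tau : |\hat{f}(\tau) - \hat{f}(x)| < \epsilon\}$, and by taking finite intersections and using that $BD_p(G)$ is dense in $C(Sp(BD_p(G)))$ in the sup-norm, it suffices to produce, for each such neighborhood, a $D_p$-massive subset $A$ whose associated set $\hat{A}_u \cap \partial_p(G)$ is squeezed between $x$ and the given neighborhood.

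The key construction I would carry out is this. Given $x \in \partial_p(G)$ and a sup-norm neighborhood determined by some $f \in BD_p(G)$, I would first reduce to a real nonnegative function: replace $f$ by an appropriate real part, take absolute values or truncate, and normalize so that $\hat{f}(x) > 0$ while $\hat{f}$ is small on the part of $\partial_p(G)$ we wish to exclude. The goal is to realize the candidate basic neighborhood as $\hat{A}_u \cap \partial_p(G)$ for a genuine $D_p$-massive set $A$ with inner potential $u$. To build $A$, I would take a sublevel/superlevel structure: choose a threshold and let $A$ be (a connected component of) the set where a suitably modified function exceeds that threshold, then invoke the solvability of the $p$-Dirichlet problem on $A$ with boundary value $0$ on $\partial A$ and normalized supremum $1$ to obtain the inner potential $u$, extended by $0$ off $A$. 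The existence and $p$-harmonicity of such $u$ on $A$ (conditions (1)--(3) in the definition of inner potential) is exactly what makes $A$ massive; this rests on the same variational/minimization machinery already used in the Propositions cited from \cite{Puls}.

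Once the inner potential $u$ is in hand, I would verify that $\hat{A}_u \cap \partial_p(G)$ is an open subset of $\partial_p(G)$ containing $x$: openness follows because $\hat{u}$ is continuous on $Sp(BD_p(G))$ and $\hat{A}_u = \{\hat{u} > 0\}$ is the preimage of an open set, while $x \in \hat{A}_u$ follows from the normalization $\hat{u}(x) > 0$, which I would arrange by choosing the threshold so that the sequence in $G$ converging to $x$ eventually lies in $A$ and carries $u$-values bounded away from $0$. To see that $\hat{A}_u \cap \partial_p(G) \subseteq U$, I would use that $u$ vanishes off $A$ together with Lemma \ref{infinitelength} and the control on $\hat{f}$ to argue that any boundary point where $\hat{u} > 0$ is approached by a sequence in $G$ remaining in the region where $\hat{f}$ is close to $\hat{f}(x)$, hence lies in the prescribed neighborhood.

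The main obstacle I anticipate is the construction of the $D_p$-massive subset with the prescribed separating behavior, specifically guaranteeing that the inner potential $u$ can be made to satisfy $\hat{u}(x) > 0$ while keeping $\{\hat{u} > 0\} \cap \partial_p(G)$ small enough to fit inside $U$. Controlling the boundary behavior of $\hat{u}$ on $\partial_p(G)$ from the finite-dimensional data of $f$ near $x$ is delicate, because the sets $\hat{A}_u$ are governed by the solution of a nonlinear Dirichlet problem rather than by $f$ directly, so the comparison between $\hat{u}$ and $\hat{f}$ on the boundary must be established through a maximum-principle or comparison argument for $p$-harmonic functions (again drawing on the results of \cite{Puls} and \cite{H2}). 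Ensuring that $A$ is connected and that $\partial A \neq \emptyset$, and that the normalization $\sup_A u = 1$ is compatible with the separation we want, is the part that will require the most care.
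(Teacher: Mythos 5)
Your skeleton---superlevel set, connected component, exhaustion Dirichlet problems, extension by zero, then the openness and containment checks---matches the paper's construction, but the one idea that makes that skeleton actually work is missing, and it is exactly the point you flag as ``the main obstacle'' and then leave unresolved. The paper's resolution is to \emph{not} work with $f$ (or its real part, absolute value, or truncation) at all: after approximating the Urysohn function by some $f \in BD_p(G)$ in sup-norm, it applies Proposition \ref{decomp} to write $f = w + h$ with $w \in B(\overline{\ell^p(G)})_{D_p}$ and $h \in BHD_p(G)$, and then Theorem \ref{vanishbound} gives $\hat{w} = 0$ on $\partial_p(G)$, so $h$ is a \emph{bounded $p$-harmonic} function with the same boundary values as $f$, namely $\hat{h}(\tau) = 1$ and $\hat{h} = 0$ on $\partial_p(G) \setminus U$. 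Every subsequent estimate rides on the $p$-harmonicity of $h$: the maximum principle gives $0 \le \hat{h} \le 1$; the component $A$ of $W = \{ g \in G \mid h(g) > \epsilon \}$ containing a sequence converging to $\tau$ is infinite by Theorem 3.14 of \cite{H2}; and, crucially, the renormalization $v = (h - \epsilon)/(1 - \epsilon)$ is still $p$-harmonic, so the comparison principle forces the exhaustion solutions $u_n$ (with boundary data $\max\{0, v\}$) to satisfy $v \le u_n \le 1$, whence the limit $u$ satisfies $\sup_{a \in A} u(a) = 1$, $\widehat{u}(\tau) = 1$, and $u = 0$ on $\partial A$ (since $h \le \epsilon$ there).

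Your proposed reductions---``take absolute values or truncate, and normalize''---destroy $p$-harmonicity, and for a non-$p$-harmonic function there is no comparison principle relating it to the Dirichlet solutions $u_n$; you therefore cannot conclude either that the inner potential satisfies $\widehat{u}(x) > 0$ at the given boundary point (i.e., that $x$ lies in the candidate basic set) or that $\sup_{a \in A} u(a) = 1$ (i.e., that $u$ is an inner potential at all). So the gap is not a technicality deferrable to ``the same variational machinery'': without passing to the harmonic representative via Proposition \ref{decomp} and Theorem \ref{vanishbound}, the construction of a $D_p$-massive set whose trace on $\partial_p(G)$ is controlled does not go through. The final containment check you do have essentially right: if $x \in \hat{A}_{u'} \cap \partial_p(G)$ and $(g_k) \rightarrow x$, then $g_k \in A$ for large $k$, so $\hat{f}(x) = \hat{h}(x) = \lim_{k \rightarrow \infty} h(g_k) \ge \epsilon > 0$, forcing $x \in U$.
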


\begin{proof}
Let $U$ be an open subset of $\partial_p(G)$ and let $\tau \in U$. By Urysohn's lemma there exists an $f \in C(Sp(BD_p(G)))$ with $f(\tau) =1$ and $f = 0$ on $\partial_p(G) \setminus U$. Since the Gelfand transform of $BD_p(G)$ is dense in $C(Sp (BD_p(G)))$ with respect to the sup-norm we will assume that $f \in BD_p(G)$. By Proposition \ref{decomp} we can write $f = w + h$, where $w \in B(\overline{\ell^p(G)})_{D_p}$ and $h \in BHD_p(G)$. Since $\hat{w} = 0$ on $\partial_p(G)$, we have that $\hat{h}(\tau) = 1$ and $\hat{h} = 0$ on $\partial_p(G) \setminus U$. Also $0 \leq \hat{h} (\sigma) \leq 1$ for all $\sigma \in Sp(BD_p(G))$ by the maximum principle. Fix $\epsilon$ with $0 < \epsilon < 1$ and set $W = \{ g \in G \mid h(g) > \epsilon \}$. The set $W$ is a infinite proper subset of $G$ since $\hat{h}$ is nonconstant and continuous. Let $A$ be a component of $W$ that contains a sequence $(g_k)$ that converges to $\tau$. It follows from Theorem 3.14 of \cite{H2} that $A$ is infinite. Define a function $v$ on $G$ by
$$ v = \frac{h - \epsilon}{1- \epsilon}. $$
Let $O_n = \{ g \in G \mid |g| < n \}$. By Theorem 3.11 of \cite{H2} there exists a $p$-harmonic function $u_n$ on $O_n \cap A$ that takes the values max$\{ 0, v \}$ on $G \setminus (O_n \cap A)$ such that $0 \leq u_n \leq 1$ on $O_n \cap A$. Now $u_n \in BD_p(G)$ since $(O_n \cap A )$ is finite. By passing to a subsequence if necessary we may assume that the sequence $(u_n)$ converges pointwise to a function $u$. By Lemma 3.21 of \cite{H2} $u$ is $p$-harmonic on $A$. Also $v \leq u_n \leq 1$ on $O_n$ so $\sup_{a \in A} u(a) = 1$. Furthermore $u = 0$ on $\partial A$ because $h \leq \epsilon$ on $\partial A$. Since $h \in BD_p(G)$ it follows that $u \in BD_p(G)$. Thus $A$ is a $D_p$-massive subset of $G$ with inner potential $u$.

Define a function $u'$ on $G$ by $u' =u$ on $A$ and $u'=0$ on $G \setminus A$. Clearly $u'$ is an inner potential for $A$. We will now show that $\hat{A}_{u'} \cap \partial_p(G)$ is a basic neighborhood of $\tau$ contained in $U$. Recall that $A$ contains a sequence $(g_k)$ that converges to $\tau$. Since $\hat{h} (\tau) = 1$ it follows that $\lim_{k \rightarrow \infty} v(g_k) = 1$. Thus $\widehat{u'} (\tau) = 1$ due to $v \leq u \leq 1$. So $\tau \in \hat{A}_{u'} \cap \partial_p(G)$. The proof will be complete once we show that $\hat{A}_{u'} \cap \partial_p(G) \subseteq U$, which we now do. Let $x \in \partial_p(G) \cap \hat{A}_{u'}$ and let $(g_k)$ be a sequence in $G$ that converges to $x$. For large $k$ we have $\widehat{u'} (g_k) > 0$ since $\widehat{u'} (x) >0$. It follows that $g_k \in A$ for large $k$. Therefore, $x \in U$ because $f(x) = \hat{h}(x) = \lim_{k \rightarrow \infty} h(g_k) \geq \epsilon.$
\end{proof}

\section{Proofs of Theorems \ref{homeomorphic} and \ref{bijection}}\label{homeomorphicproof}
In this section $G$ and $H$ will always be finitely generated groups with symmetric generating sets $S$ and $T$ respectively. We will denote the Cayley graph of $G$ by $X_G$ and the Cayley graph of $H$ by $X_H$. Let $\phi \colon G \rightarrow H$ be a rough isometry and let $\phi^{\ast}$ denote the map from $\ell^{\infty}(H)$ to $\ell^{\infty}(G)$ given by $\phi^{\ast} f(g) = f (\phi(g))$, where $f \in \ell^{\infty}(H)$. We start by defining a map $\bar{\phi} \colon \partial_p(G) \rightarrow \partial_p(H)$. Let $x \in \partial_p(G)$, then there exists a sequence $(g_n)$ in $G$ such that $(g_n) \rightarrow x$. Now $(\phi (g_n))$ is a sequence in the compact Hausdorff space $Sp (BD_p(H))$. By passing to a subsequence if necessary we may assume that $(\phi (g_n))$ converges to a unique limit $y$ in $Sp (BD_p(H))$. Now define $\bar{\phi} (x) = y$. Before we show that $y \in \partial_p (H)$ and $\bar{\phi}$ is well-defined we need the following lemma.
\begin{Lem} \label{adjoint}
Let $G$ and $H$ be finitely generated groups. If $\phi \colon G \rightarrow H$ is a rough isometry, then 
\begin{enumerate} 
  \item[]
  \begin{enumerate}
      \item $\phi^{\ast} \colon BD_p(H) \mapsto BD_p(G)$
      \item $\phi^{\ast} \colon \ell^p(H) \mapsto \ell^p(G) $
      \item $\phi^{\ast} \colon B(\overline{\ell^p(H)})_{D_p} \mapsto B(\overline{\ell^p(G)})_{D_p}.$
   \end{enumerate} 
 \end{enumerate}
\end{Lem}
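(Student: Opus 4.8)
The plan is to prove the three inclusions in order, deriving (c) from (a) and (b). Throughout I will exploit the two quantitative consequences of $\phi$ being a rough isometry. The lower bound $\frac{1}{a}d_G(g_1,g_2) - b \le d_H(\phi(g_1),\phi(g_2))$ shows that $\phi$ is ``roughly injective'': for $R \ge 0$, any two points of $\phi^{-1}(\overline{B}_H(h,R))$ lie within $H$-distance $2R$ of each other, hence within $G$-distance $a(2R+b)$, so that preimage meets at most $M_R := \#\overline{B}_G(e_G, a(2R+b)) < \infty$ points, a bound independent of $h$ by homogeneity of $G$. The upper bound $d_H(\phi(g),\phi(gs^{-1})) \le a+b$ for $s \in S$ will let me transport each edge of $X_G$ to a short path in $X_H$.

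Part (b) is the easy warm-up and already isolates the counting idea. For $f \in \ell^p(H)$, grouping $\sum_{g} |f(\phi(g))|^p$ according to the value $\phi(g) = h$ and using that each fibre $\phi^{-1}(h)$ has at most $M_0$ elements (take $R=0$ above) gives $\sum_{g} |\phi^{\ast}f(g)|^p \le M_0 \sum_h |f(h)|^p < \infty$, so $\phi^{\ast}f \in \ell^p(G)$.

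Part (a) is the heart of the matter. Boundedness is immediate, since $|\phi^{\ast}f(g)| = |f(\phi(g))| \le \|f\|_\infty$, so the work is to control the seminorm $\|\phi^{\ast}f\|$. Writing $N := \lceil a+b\rceil$, for each $g \in G$ and $s \in S$ I would choose a geodesic in $X_H$ from $\phi(g)$ to $\phi(gs^{-1})$; it has length $\le N$, so telescoping $f$ along it and applying the discrete power-mean inequality $|\sum_{i=1}^{k} x_i|^p \le k^{p-1}\sum_i |x_i|^p$ bounds $|f(\phi(gs^{-1})) - f(\phi(g))|^p$ by $N^{p-1}$ times the sum of $|df|^p$ over the edges of that geodesic. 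Summing over $(g,s)$ and interchanging the order of summation, I then need to bound how often a fixed edge $e = \{h_1,h_2\}$ of $X_H$ is used: if $e$ lies on the chosen path then $d_H(\phi(g),h_1) \le N$, so by the roughly injective estimate the number of admissible $g$ is at most $M_N$, and each contributes at most $\#S$ pairs. Hence every edge of $X_H$ is counted at most $M_N\,\#S$ times, and since $\sum_e |df(e)|^p \le \|f\|^p_{D_p(H)}$ I obtain $\|\phi^{\ast}f\|^p \le N^{p-1} M_N \#S\, \|f\|^p_{D_p(H)}$; in particular $\phi^{\ast}f \in BD_p(G)$. I expect this multiplicity bound --- the fact that local finiteness of $G$ together with rough injectivity caps the number of edges of $X_G$ piling up over a given edge of $X_H$ --- to be the main obstacle and the step requiring the most care.

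For part (c), the same computation shows more: the seminorm estimate holds for every $f \in D_p(H)$, and point evaluation at the fixed vertex $\phi(e_G)$ is dominated by $\|\cdot\|_{D_p(H)}$ (connect $e_H$ to $\phi(e_G)$ by a path and use the seminorm), so $\phi^{\ast}$ is a bounded linear operator from $D_p(H)$ to $D_p(G)$. Now given $f \in B(\overline{\ell^p(H)})_{D_p}$, pick a sequence $(f_n)$ in $\ell^p(H)$ with $\|f - f_n\|_{D_p(H)} \to 0$. By part (b) each $\phi^{\ast}f_n \in \ell^p(G)$, and by continuity $\phi^{\ast}f_n \to \phi^{\ast}f$ in $D_p(G)$, whence $\phi^{\ast}f \in (\overline{\ell^p(G)})_{D_p}$; since $\phi^{\ast}f$ is bounded by part (a), in fact $\phi^{\ast}f \in B(\overline{\ell^p(G)})_{D_p}$, completing the proof.
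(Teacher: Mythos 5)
Your proposal is correct and follows essentially the same route as the paper: telescoping $f$ along a path of length at most $a+b$ from $\phi(g)$ to $\phi(gs^{-1})$, applying the power-mean (Jensen) inequality, and then using the lower rough-isometry bound together with local finiteness to get a uniform cap on how many pairs $(g,s)$ can load a given edge of $X_H$, which is exactly the paper's finite-multiplicity set $U$ argument. The paper proves only part (a) and declares (b) and (c) ``similar''; your fiber-counting proof of (b) and your derivation of (c) from the operator bound $\|\phi^{\ast}f\|_{D_p(G)} \leq C\|f\|_{D_p(H)}$ plus a limiting argument are precisely the natural completions of that omission.
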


\begin{proof}
We will only prove part (a) since the proofs of parts (b) and (c) are similar. Let $f \in BD_p(H)$. We will now show that $\phi^{\ast} f \in BD_p(G)$. Let $g \in G$ and $s \in S$, so $g$ and $gs^{-1}$ are neighbors in $X_G$ but $\phi(gs^{-1})$ and $\phi(g)$ are not necessarily neighbors in $X_H$. However by the definition of rough isometry there exists constants $a \geq 1$ and $b \geq 0$ such that $d_H( \phi(gs^{-1}), \phi(g)) \leq a+b$. Set $h_1 = \phi (g)$ and $h_l = \phi(gs^{-1})$ and let $ h_1, \dots, h_l $ be a path in $X_H$ with length at most $a + b$. Thus
\begin{equation}
\begin{split}
 |\phi^{\ast}f(gs^{-1}) - \phi^{\ast}f(g) |^p & =  | f(\phi(gs^{-1})) - f(\phi(g)) |^p \\
  & \leq |a +b|^{p-1} \sum_{j=1}^{l-1} | f(h_{j+1}) - f(h_j)|^p. \label{eq:boundaplusb}
\end{split}
\end{equation}
The above inequality follows from Jensen's inequality applied to the function $x^p$ for $x >0$.

Let $h \in H$ and $t \in T$. We now claim that there is at most a finite number of paths in $X_H$ of length at most $a + b$ that contain the edge $h, ht^{-1}$ and have the endpoints $\phi(g)$ and $\phi(gs^{-1})$. To see the claim let $U$ be the set of all elements in $G$ such that the following four distances: $d_H (\phi(g), h), d_H (\phi(g), ht^{-1}), d_H(\phi(gs^{-1}), h)$ and $d_H(\phi(gs^{-1}), ht^{-1})$ are all at most $a + b$. Let $g$ and $g'$ be elements in $U$. By the triangle inequality, $d_H ( \phi(g'), \phi (g)) \leq d_H(\phi (g'), h) + d_H ( \phi(g), h)$. It now follows from the definition of rough isometry that $d_G (g', g) \leq 2a^2 + 3ab$. Thus the metric ball $B(g, 2a^2 + 3ab +1)$ contains $U$ as a subset. Hence the cardinality of $U$ is bounded above by some constant $k$. Observe that $k$ is independent of $h$ and $t$. Since $f \in BD_p(H)$ it follows from \ref{eq:boundaplusb} that
$$ \sum_{g \in G} \sum_{s \in S} | \phi^{\ast} f(gs^{-1}) - \phi^{\ast} f(g) |^p \\
\leq |a+b|^{p-1} k \sum_{h \in H} \sum_{t \in T} | f(ht^{-1}) - f(h) |^p \\
< \infty. $$
\end{proof}

We are now ready to prove 
\begin{Prop} \label{welldefined}
The map $\bar{\phi}$ is well-defined from $\partial_p(G)$ to $\partial_p(H)$.
\end{Prop}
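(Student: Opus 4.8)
The plan is to pin down the character $y$ explicitly as $y = x \circ \phi^\ast$, which will simultaneously show that $y$ is independent of all the choices made in the construction and that $y$ lands in $\partial_p(H)$. The first thing I would establish is the identity
$$ y(f) = x(\phi^\ast f) \qquad \text{for all } f \in BD_p(H). $$
Fixing $f \in BD_p(H)$, Lemma \ref{adjoint}(a) guarantees $\phi^\ast f \in BD_p(G)$, so that $x(\phi^\ast f)$ is legitimate. Since $(g_n) \to x$ in the weak-$\ast$ topology of $Sp(BD_p(G))$, we have $(\phi^\ast f)(g_n) \to x(\phi^\ast f)$; but $(\phi^\ast f)(g_n) = f(\phi(g_n))$, and along the subsequence used to define $y$ we have $\phi(g_n) \to y$, so $f(\phi(g_n)) \to y(f)$. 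Comparing the two limits yields the identity.

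With this in hand, well-definedness is immediate. The expression $x \circ \phi^\ast$ depends only on $x$, not on the sequence $(g_n)$ nor on the extracted subsequence; moreover $\phi^\ast$ is a unital algebra homomorphism (precomposition with $\phi$ preserves pointwise products and sends $1_H$ to $1_G$) and $x$ is a character, so $x \circ \phi^\ast$ is again a character of $BD_p(H)$. Running the same argument on an arbitrary subsequence shows that every convergent subsequence of $(\phi(g_n))$ has the identical limit $x \circ \phi^\ast$; since $Sp(BD_p(H))$ is compact Hausdorff, a sequence with a single cluster point converges, so in fact the whole sequence $(\phi(g_n))$ tends to $y = x \circ \phi^\ast$, regardless of choices.

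It then remains to verify $y \in \partial_p(H)$. Recall that $B(\overline{\mathbb{C}H})_{D_p} = B(\overline{\ell^p(H)})_{D_p}$, since $\mathbb{C}H$ is dense in $\ell^p(H)$ in the $\ell^p$-norm. For the vanishing condition I would take $f \in B(\overline{\mathbb{C}H})_{D_p}$; Lemma \ref{adjoint}(c) gives $\phi^\ast f \in B(\overline{\mathbb{C}G})_{D_p}$, and because $x \in \partial_p(G)$ we conclude $y(f) = x(\phi^\ast f) = 0$. Finally, to see that $y$ lies in the $p$-Royden boundary rather than in $H$, I would note that each point mass $\delta_h$ lies in $B(\overline{\mathbb{C}H})_{D_p}$, so $y(\delta_h) = 0$ for every $h \in H$, whereas $i(h)(\delta_h) = \delta_h(h) = 1$; thus $y \neq i(h)$ for all $h \in H$, i.e. $y \in Sp(BD_p(H)) \setminus H$.

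I expect the only genuinely delicate point to be the careful handling of the two limit processes in the key identity—in particular invoking Lemma \ref{adjoint}(a) so that $x(\phi^\ast f)$ is meaningful, and then using compactness to upgrade ``unique cluster point'' to honest convergence of the full sequence. Once the identity $y = x \circ \phi^\ast$ is secured, membership in $\partial_p(H)$ follows mechanically from parts (a) and (c) of the adjoint lemma together with the standing assumption on $G$.
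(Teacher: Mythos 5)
Your proof is correct, and it is organized differently from the paper's. The paper argues entirely by contradiction: for well-definedness it assumes two subsequential limits $y_1 \neq y_2$, picks $f \in BD_p(H)$ separating them, and uses $\phi^\ast f$ to force $f(y_1) = f(y_2)$; for membership in $\partial_p(H)$ it assumes $\hat{f}(y) \neq 0$ for some $f \in B(\overline{\ell^p(H)})_{D_p}$ and contradicts Theorem \ref{vanishbound}; and it rules out $y \in H$ via Lemma \ref{infinitelength} together with the metric estimate $d_H(\phi(e_G),\phi(g_n)) \to \infty$ coming from the rough-isometry inequality. You instead establish the single identity $y = x \circ \phi^\ast$ up front, after which everything is direct: well-definedness is automatic because $x \circ \phi^\ast$ depends on no choices (and is a character since $\phi^\ast$ is a unital algebra homomorphism), the vanishing condition follows from Lemma \ref{adjoint}(c) and the definition of $\partial_p(G)$, and $y \notin i(H)$ follows algebraically from $y(\delta_h) = 0 \neq 1 = i(h)(\delta_h)$, with no metric input at all. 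Your formulation buys several things: an explicit formula for $\bar{\phi}$ that could be used to define it without extracting subsequences, convergence of the full sequence $(\phi(g_n))$ rather than a subsequence, and independence from Lemma \ref{infinitelength}; the paper's version, on the other hand, exercises the metric machinery that is reused later (e.g., in the surjectivity argument). One small caution: your step ``every convergent subsequence has the same limit, hence the sequence converges'' is slightly too quick in a possibly non-metrizable spectrum, where cluster points need not be subsequential limits; but this is harmless here, since your identity argument applies verbatim to any cluster point $z$ (continuity of $\hat{f}$ makes $z(f)$ a cluster value of the convergent numerical sequence $f(\phi(g_n))$, forcing $z(f) = x(\phi^\ast f)$), and in any case the paper's own subsequence extraction carries the same caveat.
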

\begin{proof}
Let $x, y$ and $(g_n)$ be as above. We start by showing that $y \in \partial_p(H)$. Lemma \ref{infinitelength} tells us that $|g_n| = d_G(e_G, g_n) \rightarrow \infty$ as $n \rightarrow \infty$. The element $\phi (e_G)$ is fixed in $H$ so it follows from the definition of rough isometry that $d_H ( \phi(e_G), \phi (g_n)) \rightarrow \infty$ as $n \rightarrow \infty$. Thus $ y \in Sp(BD_p(H)) \setminus H$ since $y = \lim_{n \rightarrow \infty} \phi (g_n) \notin H$. Let $f \in B(\overline{\ell^p(H)})_{D_p}$ and suppose $\hat{f}(y) \neq 0$. Then $0 \neq \lim_{n \rightarrow \infty} f( \phi (g_n)) = \phi^{\ast} f(x)$. By part (c) of Lemma \ref{adjoint} $\phi^{\ast} f \in B(\overline{\ell^p(G)})_{D_p}$ and Theorem \ref{vanishbound} says that $\phi^{\ast} f(x) = 0$, a contradiction. Hence $\hat{f}(y) =0 $ for all $f \in B(\overline{\ell^p(H)})_{D_p}$, so $y \in \partial_p (H)$.

We will now show that $\bar{\phi}$ is well-defined. Let $(g_n)$ and $(g_n')$ be sequences in $G$ that both converge to $x \in \partial_p(G)$. Now suppose that $(\phi (g_n))$ converges to $y_1$ and $(\phi (g_n'))$ converges to $y_2$ in $Sp( BD_p(H))$. Assume that $y_1 \neq y_2$ and let $f \in BD_p(H)$ such that $f(y_1) \neq f(y_2)$. By  part(a) of Lemma \ref{adjoint} $\phi^{\ast} f \in BD_p(G)$. Thus 
$$ \lim_{n \rightarrow \infty} \phi^{\ast} f(g_n) = \phi^{\ast} f(x) = \lim_{n \rightarrow \infty} \phi^{\ast} f(g_n')$$
which implies $f(y_1) = f(y_2)$, a contradiction. Hence $\bar{\phi}$ is a well-defined map from $\partial_p(G)$ to $\partial_p(H)$.
\end{proof}

The next lemma will be used to show that $\bar{\phi}$ is one-to-one and onto.
\begin{Lem} \label{ontolemma}
Let $\phi \colon G \rightarrow H$ be a rough isometry and let $\psi$ be a rough inverse for $\phi$. If $f \in D_p(G)$, then $\lim_{|g| \rightarrow \infty} | f((\psi \circ \phi )(g)) - f(g) | = 0$.
\end{Lem}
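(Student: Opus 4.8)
The plan is to combine the uniform displacement bound coming from the rough inverse with the summability that is built into membership in $D_p(G)$. Writing $R = a(c+b)$, the defining property of the rough inverse gives $d_G((\psi\circ\phi)(g), g) \leq R$ for all $g \in G$, so $g$ and $(\psi\circ\phi)(g)$ can always be joined in the Cayley graph $X_G$ by a geodesic of length at most $R$, a bound independent of $g$. The whole argument will hinge on the fact that this fixed-length connecting path recedes to infinity together with $g$.

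First I would fix $g$ and pick such a geodesic $g = v_0, v_1, \dots, v_m = (\psi\circ\phi)(g)$ with $m \leq R$ and $v_{i+1} = v_i s_i$ for generators $s_i \in S$. The triangle inequality then gives
$$ |f((\psi\circ\phi)(g)) - f(g)| \leq \sum_{i=0}^{m-1} |f(v_i s_i) - f(v_i)|, $$
and, because $S$ is symmetric, every summand on the right is one of the terms appearing in the finite sum $\sum_{s\in S}\sum_{h\in G} |f(hs^{-1}) - f(h)|^p$ that witnesses $f \in D_p(G)$. So the problem reduces to showing that these individual edge differences become small as the path leaves every finite set.

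Next I would extract this smallness from summability: for each $\delta > 0$ only finitely many pairs $(h,s)$ satisfy $|f(hs^{-1}) - f(h)| \geq \delta$, so there is a finite set $F_\delta \subseteq G$ outside of which all edge differences are below $\delta$. The geometric input is that each vertex $v_i$ on the path satisfies $|v_i| \geq |g| - R$, so once $|g|$ is large the entire path lies outside $F_\delta$. Given $\epsilon > 0$, choosing $\delta = \epsilon/R$ and then $|g| > R + \max\{|h| : h \in F_\delta\}$ forces every summand below $\delta$, whence the displayed sum of at most $R$ terms is at most $R\delta = \epsilon$. This gives the asserted limit.

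The argument is mostly bookkeeping, and I do not anticipate a serious obstacle. The one point needing a little care is the step that converts $p$-summability of the edge differences into their uniform smallness off a finite set, together with keeping the path length bounded by the single constant $R$ uniformly in $g$; once those are in place the telescoping estimate closes immediately. Note that boundedness of $f$ is never used, which is why the statement holds for all of $D_p(G)$ rather than merely $BD_p(G)$.
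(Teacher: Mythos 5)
Your proof is correct and takes essentially the same route as the paper: telescope $f$ along a path of length at most $a(c+b)$ joining $g$ to $(\psi\circ\phi)(g)$, and use the summability defining $D_p(G)$ to conclude that the edge differences along that path vanish as $|g|\rightarrow\infty$. The only difference is cosmetic --- the paper bounds the telescoped sum via Jensen's inequality applied to $p$-th powers, whereas you use the plain triangle inequality with an explicit $\epsilon/R$ choice, and in doing so you actually spell out the finite-exceptional-set step that the paper leaves implicit.
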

\begin{proof}
Let $g \in G$, since $\psi$ is a rough inverse of $\phi$ there are non-negative constants $a, b$ and $c$ with $a \geq 1$ such that $d_G( (\psi \circ \phi) (g), g) \leq a(c + b)$. Let $g_1, g_2, \dots , g_n$ be a path in $X_G$ of length not more than $a (c+b)$ with $g_1 = g$ and $g_n = (\psi \circ \phi) (g)$. So
\begin{equation*}
\begin{split}
| f((\psi \circ \phi) (g)) - f(g) |^p & = |\sum_{k=1}^{n-1} ( f(g_{k+1}) - f(g_k))|^p \\
                              & \leq n^{p-1} \sum_{k = 1}^{n-1} | f(g_{k+1}) - f(g_k) |^p.
\end{split}
\end{equation*} 
The last sum approaches zero as $|g| \rightarrow \infty$ since $f \in D_p(G)$ and $n \leq a(c+b)$. Thus $\lim_{|g| \rightarrow \infty} | f( (\psi \circ \phi) (g)) - f(g) | = 0$.
\end{proof}

The next proposition shows that $\bar{\phi}$ is a bijection.
\begin{Prop}
The function $\bar{\phi}$ is a bijection.
\end{Prop}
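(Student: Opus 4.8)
The plan is to produce an explicit two-sided inverse for $\bar{\phi}$. Since $\psi \colon H \to G$ is itself a rough isometry, the construction that precedes Proposition \ref{welldefined}, applied to $\psi$, yields a map $\bar{\psi} \colon \partial_p(H) \to \partial_p(G)$; its well-definedness follows verbatim from Proposition \ref{welldefined} with the roles of $G$ and $H$ interchanged, using parts (a) and (c) of Lemma \ref{adjoint} for $\psi$ in place of $\phi$. I would then show that $\bar{\psi} \circ \bar{\phi} = \mathrm{id}_{\partial_p(G)}$ and $\bar{\phi} \circ \bar{\psi} = \mathrm{id}_{\partial_p(H)}$, which at once gives that $\bar{\phi}$ is a bijection.

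First I would establish $\bar{\psi} \circ \bar{\phi} = \mathrm{id}_{\partial_p(G)}$. Fix $x \in \partial_p(G)$ and a sequence $(g_n)$ in $G$ with $(g_n) \to x$; passing to a subsequence and relabeling, I may assume that $(\phi(g_n)) \to y := \bar{\phi}(x)$ in $Sp(BD_p(H))$, while still $(g_n) \to x$. By Lemma \ref{infinitelength} we have $|g_n| \to \infty$. Now $(\phi(g_n))$ is a sequence in $H$ converging to $y$, so by the well-definedness of $\bar{\psi}$ its value $\bar{\psi}(y)$ is the limit, along a subsequence, of $((\psi \circ \phi)(g_n))$ in $Sp(BD_p(G))$. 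The key point is to identify that limit with $x$, for which it suffices to show $\lim_n f((\psi \circ \phi)(g_n)) = x(f)$ for every $f \in BD_p(G)$. Since $BD_p(G) \subseteq D_p(G)$ and $|g_n| \to \infty$, Lemma \ref{ontolemma} gives $|f((\psi \circ \phi)(g_n)) - f(g_n)| \to 0$, whence $\lim_n f((\psi \circ \phi)(g_n)) = \lim_n f(g_n) = x(f)$. Thus $(\psi \circ \phi)(g_n) \to x$ in the weak-$\ast$ topology, and in particular $\bar{\psi}(\bar{\phi}(x)) = x$.

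The identity $\bar{\phi} \circ \bar{\psi} = \mathrm{id}_{\partial_p(H)}$ is proved identically after interchanging $G$ and $H$. The only extra ingredient is the analog of Lemma \ref{ontolemma} for $\phi \circ \psi$, namely $\lim_{|h| \to \infty} |f((\phi \circ \psi)(h)) - f(h)| = 0$ for $f \in D_p(H)$. This follows by the same path-length estimate used in the proof of Lemma \ref{ontolemma}, now invoking the bound $d_H((\phi \circ \psi)(h), h) \leq c$ from the definition of rough inverse in place of $d_G((\psi \circ \phi)(g), g) \leq a(c+b)$. With both composites equal to the identity, $\bar{\phi}$ is a bijection.

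The main obstacle is the central step of the second paragraph: verifying that applying $\psi \circ \phi$ to a sequence converging to $x$ yields a sequence that still converges to $x$ in the spectrum. This is precisely where Lemma \ref{ontolemma} carries the argument, by letting me compare $f((\psi \circ \phi)(g_n))$ with $f(g_n)$ simultaneously for every test function $f \in BD_p(G)$; weak-$\ast$ convergence is nothing more than pointwise convergence against all such $f$. Once this comparison is secured, bijectivity is a formal consequence.
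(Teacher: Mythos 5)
Your proof is correct, and it reorganizes the paper's argument rather than repeating it. The paper proves injectivity and surjectivity separately: injectivity by choosing $f \in BD_p(G)$ with $\hat{f}(x_1) \neq \hat{f}(x_2)$ and deriving a contradiction from $\bar{\phi}(x_1)=\bar{\phi}(x_2)$ via Lemma \ref{ontolemma}; surjectivity by setting $x = \lim_n \psi(h_n)$ for a sequence $(h_n) \to y$, checking $x \in \partial_p(G)$ by redoing the argument of Proposition \ref{welldefined}, and then showing $\bar{\phi}(x)=y$ with the $\phi\circ\psi$ form of Lemma \ref{ontolemma}. You instead observe that $\psi$ is itself a rough isometry (which the paper's definition of rough inverse guarantees), so Proposition \ref{welldefined} applied to $\psi$ produces $\bar{\psi}\colon \partial_p(H)\to\partial_p(G)$ with no extra work, and you prove the two composite identities; the membership check $x\in\partial_p(G)$ is thereby absorbed into the cited proposition, and your verification of $\bar{\phi}\circ\bar{\psi}=\mathrm{id}$ is essentially the paper's surjectivity computation, while $\bar{\psi}\circ\bar{\phi}=\mathrm{id}$ replaces the paper's separating-function argument for injectivity. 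The analytic core is the same in both proofs --- Lemma \ref{ontolemma} in its two forms, plus the fact that convergence in $Sp(BD_p(\cdot))$ is pointwise convergence against all functions in $BD_p(\cdot)$ --- and you are appropriately explicit about a point the paper elides, namely that the $\phi\circ\psi$ statement is not literally Lemma \ref{ontolemma} but follows from the same path estimate using $d_H((\phi\circ\psi)(h),h)\le c$. Your route also buys something concrete: the identity $\bar{\psi}\circ\bar{\phi}=\mathrm{id}_{\partial_p(G)}$, which you establish explicitly, is precisely what the paper later invokes in its proof of Theorem \ref{bijection} (``$\bar{\psi} \circ \bar{\phi}$ equals the identity on $\partial_p(G)$'') without ever having proved it, so your packaging closes that small gap.
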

\begin{proof}
Let $x_1, x_2 \in \partial_p(G)$ such that $x_1 \neq x_2$ and let $f \in BD_p(G)$ with $f(x_1) \neq f(x_2)$. There exists sequences $(g_n)$ and $(g_n')$ in $G$ such that $(g_n) \rightarrow x_1$ and $(g_n') \rightarrow x_2$. Now assume that $\bar{\phi}(x_1) = \lim_{n \rightarrow\infty} (\phi(g_n)) = \lim_{n \rightarrow \infty} ( \phi(g_n')) = \bar{\phi}(x_2)$, so $\lim_{n \rightarrow \infty} f( (\psi \circ \phi)(g_n)) = \lim_{n \rightarrow \infty} f ((\psi \circ \phi) (g_n'))$. It follows from Lemma \ref{ontolemma} that $\lim_{n \rightarrow \infty} f(g_n) = \lim_{n \rightarrow \infty} f(g_n')$, thus $f(x_1) = f(x_2)$, a contradiction. Hence $\bar{\phi}$ is one-to-one.

We now proceed to show that $\bar{\phi}$ is onto. Let $ y \in \partial_p(H)$ and let $(h_n)$ be a sequence in $H$ that converges to $y$. By passing to a subsequence if necessary, we can assume that there exist an unique $x$ in the compact Hausdorff space $Sp( BD_p(G))$ such that $( \psi(h_n)) \rightarrow x$. Since $\lim_{n \rightarrow \infty} |h_n| \rightarrow \infty$ it follows that $\lim_{n \rightarrow \infty} |\psi (h_n)| \rightarrow \infty$, so $x \notin G$. Using an argument similar to the first paragraph in the proof of Proposition \ref{welldefined} we obtain $x \in \partial_p(G)$. The proof will be complete once we show that $\bar{\phi} (x) = y$. Let $f \in BD_p(H)$. By Lemma \ref{ontolemma} we see that $\lim_{n \rightarrow \infty} |f( (\phi \circ \psi) (h_n)) - f(h_n) | = 0$. Thus $f(\bar{\phi} (x) ) = f(y)$ for all $f \in BD_p(H)$. Hence $\bar{\phi} (x) = y$.
\end{proof}

To finish the proof that the bijection $\bar{\phi}$ is a homeomorphism we only need to show that $\bar{\phi}$ is continuous, since both $Sp(BD_p(G))$ and $Sp(BD_p (H))$ are compact Hausdorff spaces. To see that $\bar{\phi}$ is continuous, let $V$ be a basic open neighborhood of $\partial_p (H)$. Then there exists a $D_p$-massive subset $A$ of $H$ with inner potential $v$ such that $V = \hat{A}_v \cap \partial_p (H)$. We may and do assume that $v = 0$ on $H\backslash A$. Observe that $y \in V$ if and only if $\hat{v}(y) >0$. By Lemma \ref{adjoint} $\phi^{\ast} v = v \circ \phi \in BD_p(G)$. Combining Proposition \ref{decomp} and Theorem \ref{vanishbound} we see that there exists an $h \in BHD_p(G)$ such that $h = \hat{v} \circ \bar{\phi}$ on $\partial_p(G)$. Let $x \in \bar{\phi}^{-1} (V)$. Then $h(x) = (v \circ \phi)(x) = \hat{v} (\bar{\phi}(x)) > 0$. Let $U = \{ x' \in \partial_p(G) \mid h(x') > \frac{h(x)}{2} \}$. The set $U$ is open since $h$ is continuous on $\partial_p(G)$. Now $\hat{v}(\bar{\phi} (x')) > 0$ for each $x' \in U$, so $\bar{\phi}(x') \in V$, thus $U \subseteq \bar{\phi}^{-1} (V)$. Therefore, $\bar{\phi}^{-1} (V)$ is open and the proof that $\bar{\phi}$ is a homeomorphism is complete.

We will now prove Theorem \ref{bijection}. Let $\phi$ be a rough isometry from $G$ to $H$ and let $\psi$ be a rough inverse to $\phi$. Let $h \in BHD_p(G)$. By Lemma \ref{adjoint} $h \circ \psi \in BD_p(H)$. Let $\pi(h \circ \psi)$ be the unique element in $BHD_p(H)$ given by Proposition \ref{decomp}. We now define a map $\Phi \colon BHD_p (G) \mapsto BHD_p(H)$ by $\Phi (h) = \pi(h \circ \psi).$ Theorem \ref{vanishbound} implies that $\pi (h \circ \psi)(\bar{\phi} (x)) = (h \circ \psi)(\bar{\phi}(x))$ for all $x \in \partial_p(G)$, where $\bar{\phi}$ is the homeomorphism from $\partial_p(G)$ to $\partial_p(H)$ defined earlier in this section. Thus $\Phi (h) (\bar{\phi}(x)) = (h \circ \psi)(\bar{\phi}(x)) = h(x)$ for all $x \in \partial_p(G)$. We can now show that $\Phi$ is one-to-one. Let $h_1, h_2 \in BHD_p(G)$ and suppose that $\Phi (h_1) = \Phi (h_2)$. So $\Phi (h_1) (\bar{\phi}(x)) = \Phi (h_2) ( \bar{\phi}(x))$ for all $ x \in \partial_p (G)$, which implies $h_1(x) = h_2(x)$ for all $x \in \partial_p (G)$. Hence, $h_1 = h_2$ by Proposition \ref{boundval}. Thus $\Phi$ is one-to-one.

We will now show that $\Phi$ is onto. Let $v \in BHD_p(H)$. Then $v \circ \phi \in BD_p(G)$. Let $h = \pi(v \circ \phi)$, where $\pi (v \circ \phi)$ is the unique element in $BHD_p(G)$ given by Proposition \ref{decomp}. Let $y \in \partial_p (H)$. Since $h(x) = \pi( v \circ \phi)(x)$ for all $ x \in \partial_p(G)$ and $\bar{\psi} \circ \bar{\phi}$ equals the identity on $\partial_p(G)$, we see that $(\Phi (h))(y) = \pi (h \circ \psi)(y) = h (\psi(y)) = v(\phi \circ \psi)(y) = v(y)$. Thus $\Phi$ is onto and the proof of Theorem \ref{bijection} is complete.

The map $\Phi$ is an isomorphism in the case $p=2$ since $BHD_2 (G)$ and $BHD_2 (H)$ are linear spaces. However, in general these spaces are not linear if $ p \neq 2$. 

\section{Translation Invariant Linear Functionals}\label{tilf}
By definition we have the following inclusions:
$$ \mbox {Diff}(\ell^p(G)) \subseteq \mbox{Diff}(D_p(G)/\mathbb{C}) \subseteq \ell^p(G) \subseteq D_p(G)/\mathbb{C}.$$
The set $D_p(G)/\mathbb{C}$ is a Banach space under the norm induced from the pseudonorm on $D_p(G)$. Thus if $[f]$ if a class from $D_p(G) / \mathbb{C}$, then its norm is given by 
$$ \parallel [f] \parallel_{D(p)} = \left( \sum_{g \in G} \sum_{s \in S} \vert f(gs^{-1}) - f(g) \vert^p \right)^{1/p}.$$
We will write $\parallel f \parallel_{D(p)}$ for $\parallel [f] \parallel_{D(p)}$. Now $(\overline{\ell^p (G)})_{D(p)} = D_p(G)/\mathbb{C}$ if and only if $( \overline{ \ell^p(G) \oplus \mathbb{C}})_{D(p)} = D_p(G).$ So $\bar{H}_{(p)}^1 (G) = 0$ if and only if $( \overline{\ell^p (G)})_{D(p)} = D_p(G)/\mathbb{C}$. We begin by proving the following
\begin{Lem} \label{equality}
$(\overline{\mbox{Diff}(D_p(G)/\mathbb{C})})_{D(p)} = (\overline{\ell^p(G)})_{D(p)}$.
\end{Lem}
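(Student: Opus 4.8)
The plan is to prove the equality by establishing both inclusions between the closures. One inclusion is immediate: since $\mbox{Diff}(D_p(G)/\mathbb{C}) \subseteq \ell^p(G)$ by the chain of inclusions stated at the start of this section, taking closures in the $D(p)$-norm gives $(\overline{\mbox{Diff}(D_p(G)/\mathbb{C})})_{D(p)} \subseteq (\overline{\ell^p(G)})_{D(p)}$ at once. All the work therefore goes into the reverse inclusion, namely showing that every element of $\ell^p(G)$ lies in the $D(p)$-closure of $\mbox{Diff}(D_p(G)/\mathbb{C})$.

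**Reducing to a generating set of $\ell^p(G)$.** To prove the reverse inclusion it suffices to approximate a dense subset of $\ell^p(G)$, and the natural choice is $\mathbb{C}G$, the finitely supported functions, which are $\ell^p$-dense in $\ell^p(G)$ and hence $D(p)$-dense (recall $\parallel f \parallel_{D(p)} \leq$ a constant times $\parallel f \parallel_p$ on $\ell^p(G)$, since each difference term is controlled by the values of $f$). Because $\mathbb{C}G$ is spanned by the point masses $\delta_g$, the key reduction is to show that each $\delta_g$ lies in $(\overline{\mbox{Diff}(D_p(G)/\mathbb{C})})_{D(p)}$. I would exhibit $\delta_g$ explicitly as a limit, or better as an actual element, of $\mbox{Diff}(D_p(G)/\mathbb{C})$ by choosing a suitable $f \in D_p(G)$ and a translation $x \in G$ so that $f_x - f$ either equals $\delta_g$ or telescopes to it. Since $\delta_g$ itself is finitely supported and hence lies in $D_p(G)$, the most direct route is to write a point mass as a finite linear combination of differences $\delta_h - \delta_{hs}$ over edges of a path, each of which is of the form $(\delta_{h'})_x - \delta_{h'}$; summing such edge-differences along a path from $g$ out to infinity, and truncating, produces elements of $\mbox{Diff}(D_p(G)/\mathbb{C})$ whose $D(p)$-distance to $\delta_g$ can be made arbitrarily small.

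**Carrying out the approximation.** Concretely, I would fix a half-infinite geodesic ray $g = g_0, g_1, g_2, \dots$ in the Cayley graph and consider the partial sums $u_n = \sum_{k=0}^{n-1} (\delta_{g_{k+1}} - \delta_{g_k})$, each of which is a difference of a translate and telescopes to $\delta_{g_n} - \delta_g$. Each summand $\delta_{g_{k+1}} - \delta_{g_k}$ has the form $f_x - f$ with $f = \delta_{g_k}$ and a generator $x$ carrying $g_k$ to $g_{k+1}$, so $u_n \in \mbox{Diff}(D_p(G)/\mathbb{C})$, and $u_n = \delta_{g_n} - \delta_g$ differs from $-\delta_g$ by the single point mass $\delta_{g_n}$. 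The $D(p)$-pseudonorm of $\delta_{g_n}$ is a fixed finite constant (it equals $(2\#(S))^{1/p}$, independent of $n$), so this particular telescoping does not shrink; instead I would use a weighted or averaged version, spreading the mass of $\delta_g$ along longer and longer paths so that the difference term at the far end contributes a vanishing $D(p)$-norm. The cleanest implementation averages $\delta_g - \delta_{g_n}$ over $n$ from $1$ to $N$ and divides by $N$, giving an element of $\mbox{Diff}(D_p(G)/\mathbb{C})$ whose distance from $\delta_g$ is the $D(p)$-norm of $\tfrac{1}{N}\sum_{n=1}^N \delta_{g_n}$, and one checks this tends to zero as $N \to \infty$ using $1 < p$.

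**The main obstacle.** The delicate point, and the place I expect to spend the most care, is verifying that the $D(p)$-norm of the averaging error genuinely tends to zero; this is exactly where the hypothesis $1 < p$ (rather than $p = 1$) is used, since the averaged point masses must have differences summing to something that decays, and for $p = 1$ the relevant estimate can fail. I would therefore organize the argument so that the decay of $\parallel \tfrac{1}{N}\sum_{n=1}^N \delta_{g_n} \parallel_{D(p)}$ is reduced to a clean $\ell^p$ estimate along the ray, making the role of $p > 1$ transparent, and then assemble the finitely supported functions by linearity and the density of $\mathbb{C}G$ to conclude the reverse inclusion and hence the claimed equality.
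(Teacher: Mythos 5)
Your proof is correct, but it takes a genuinely different route from the paper's. The paper disposes of the hard inclusion in one stroke: it cites \cite[Lemma 1]{Woodward}, which gives a sequence in $\mbox{Diff}(\ell^p(G))$ converging to any given $f \in \ell^p(G)$ in the $\ell^p$-norm, and then transfers this to the $D(p)$-norm via Minkowski's inequality --- exactly your observation that $\parallel \cdot \parallel_{D(p)}$ is dominated by a constant times $\parallel \cdot \parallel_p$. You instead reprove the needed density by hand: telescoping along paths shows each $\delta_g - \delta_{g_n}$ lies in $\mbox{Diff}(D_p(G)/\mathbb{C})$, and the averaged element $\frac{1}{N}\sum_{n=1}^N(\delta_g - \delta_{g_n})$ has $D(p)$-distance to $\delta_g$ equal to $\parallel \frac{1}{N}\sum_{n=1}^N \delta_{g_n}\parallel_{D(p)}$, which is at most $\left(2\#(S)\right)^{1/p} N^{(1-p)/p} \rightarrow 0$, since the $g_n$ are distinct, all values of the average are $0$ or $1/N$, and each vertex meets only $\#(S)$ edges; linearity and the $D(p)$-density of $\mathbb{C}G$ in $\ell^p(G)$ then finish the argument. (A small simplification: you do not need a geodesic ray --- any infinite sequence of distinct points works, since any path from $g$ to $g_n$ telescopes, and the counting estimate is insensitive to where the points sit.) What you lose is brevity; what you gain is a self-contained argument that avoids the external citation and makes the role of $p>1$ completely explicit: your exponent $N^{1-p}$ is precisely where the hypothesis enters, and it visibly fails at $p=1$, whereas in the paper this issue is hidden inside Woodward's lemma.
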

\begin{proof}
Let $f \in \ell^p(G)$. By \cite[Lemma 1]{Woodward} there is a sequence $( f_n )$ in $\mbox{Diff}(\ell^p(G))$ that converges to $f$ in the $\ell^p$-norm. It follows from Minkowski's inequality that for $s \in S,\parallel (f-f_n)_s - (f-f_n) \parallel_p^p = \sum_{g \in G} | f(gs^{-1}) - f_n (gs^{-1}) - (f(g) - f_n(g))|^p \rightarrow 0$ as $n \rightarrow \infty$. Hence $f \in (\overline{\mbox{Diff}(\ell^p(G))})_{D(p)}$ which implies $\ell^p(G) \subseteq ( \overline{\mbox{Diff}(\ell^p(G))})_{D(p)}$. The result now follows.
\end{proof}

We are now ready to prove the following characterization of nonzero continuous TILFs on $D_p(G)/\mathbb{C}$.
\begin{Thm} \label{charlptilf}
Let $1 < p \in \mathbb{R}$. Then $\bar{H}^1_{(p)}(G) \neq 0$ if and only if there exists a nonzero continuous TILF on $D_p(G)/\mathbb{C}$.
\end{Thm}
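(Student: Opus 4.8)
The plan is to prove both directions by relating a nonzero continuous TILF on $D_p(G)/\mathbb{C}$ to the equality $(\overline{\ell^p(G)})_{D(p)} = D_p(G)/\mathbb{C}$, which the excerpt has already identified with the vanishing of $\bar H^1_{(p)}(G)$. The central tool is Lemma \ref{equality}, which tells us that the closure of $\mathrm{Diff}(D_p(G)/\mathbb{C})$ in the $D(p)$-norm coincides with $(\overline{\ell^p(G)})_{D(p)}$. Since every TILF on $D_p(G)/\mathbb{C}$ vanishes on $\mathrm{Diff}(D_p(G)/\mathbb{C})$ (this inclusion into the kernel is noted in Section \ref{Mainresults}), a \emph{continuous} TILF must vanish on the whole closure $(\overline{\mathrm{Diff}(D_p(G)/\mathbb{C})})_{D(p)} = (\overline{\ell^p(G)})_{D(p)}$. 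This observation does the main work in both directions.

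For the forward direction, suppose $\bar H^1_{(p)}(G)\neq 0$. Then $(\overline{\ell^p(G)})_{D(p)}$ is a proper closed subspace of the Banach space $D_p(G)/\mathbb{C}$. First I would pick some $[f_0]\in D_p(G)/\mathbb{C}$ not in this closed subspace and apply the Hahn–Banach theorem to produce a nonzero continuous linear functional $T$ on $D_p(G)/\mathbb{C}$ that vanishes on $(\overline{\ell^p(G)})_{D(p)}$ but with $T([f_0])\neq 0$. The remaining point is to check that such a $T$ is automatically translation invariant: for any $[f]$ and $x\in G$, the difference $f_x-f$ lies in $\mathrm{Diff}(D_p(G)/\mathbb{C})\subseteq (\overline{\ell^p(G)})_{D(p)}$ by Lemma \ref{equality}, so $T(f_x)-T(f)=T(f_x-f)=0$. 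Thus $T$ is a nonzero continuous TILF, as required.

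For the converse, suppose there exists a nonzero continuous TILF $T$ on $D_p(G)/\mathbb{C}$. As noted above, continuity together with the kernel-containment $\mathrm{Diff}(D_p(G)/\mathbb{C})\subseteq \ker T$ forces $T$ to vanish on the closed subspace $(\overline{\mathrm{Diff}(D_p(G)/\mathbb{C})})_{D(p)}$, which by Lemma \ref{equality} equals $(\overline{\ell^p(G)})_{D(p)}$. Since $T$ is nonzero, there is some $[f]$ with $T([f])\neq 0$, and this $[f]$ cannot lie in $(\overline{\ell^p(G)})_{D(p)}$. Hence $(\overline{\ell^p(G)})_{D(p)}\neq D_p(G)/\mathbb{C}$, which by the identification recorded at the start of Section \ref{tilf} is exactly the statement $\bar H^1_{(p)}(G)\neq 0$.

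I expect no serious obstacle here, since the key analytic input (Lemma \ref{equality}) and the Banach-space identification of $\bar H^1_{(p)}(G)$ are both already available in the excerpt. The only step requiring genuine care is the forward direction's verification that the Hahn–Banach functional is translation invariant, and this reduces cleanly to the membership $f_x-f\in \mathrm{Diff}(D_p(G)/\mathbb{C})$ together with the vanishing of $T$ on $(\overline{\ell^p(G)})_{D(p)}$. It is worth remarking that this argument also makes transparent why continuity is essential: a discontinuous TILF need not vanish on the full closure, so the equivalence genuinely uses the continuity hypothesis.
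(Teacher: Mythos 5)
Your proposal is correct and follows essentially the same route as the paper: the Hahn--Banach theorem applied to the proper closed subspace $(\overline{\ell^p(G)})_{D(p)}$ for the forward direction, with translation invariance coming from Lemma \ref{equality}, and for the converse the observation that a continuous TILF must vanish on $(\overline{\mathrm{Diff}(D_p(G)/\mathbb{C})})_{D(p)} = (\overline{\ell^p(G)})_{D(p)}$. Your write-up merely makes explicit the steps the paper leaves compressed (e.g.\ spelling out why vanishing on the closure forces $T(f_x)=T(f)$), which is a virtue rather than a deviation.
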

\begin{proof}
If $\bar{H}^1_{(p)}(G) \neq 0$, then $(\overline{\ell^p(G)})_{D(p)} \neq D_p(G)/\mathbb{C}$. It now follows from the Hahn-Banach theorem that there exists a nonzero continuous linear functional $T$ on $D_p(G)/\mathbb{C}$ such that $(\overline{\ell^p(G)})_{D(p)}$ is contained in the kernel of $T$. Thus $T$ is translation invariant by Lemma \ref{equality}.

Conversely if $T$ is a continuous TILF on $D_p(G)/\mathbb{C}$, then $T(f) = 0$ for all $f \in (\overline{\ell^p(G)})_{D(p)}$. So if there exists a nonzero continuous TILF on $D_p(G)/\mathbb{C}$, then $(\overline{\ell^p(G)})_{D(p)} \neq D_p(G)/\mathbb{C}$.
\end{proof}

Theorem \ref{chartilf} now follows by combining Theorems \ref{charlptilf} and \ref{charvanishing}.

If $h \in D_p(G)/\mathbb{C}$, then $\langle \triangle_p h, \cdot \rangle$ is a well-defined continuous linear functional on $D_p(G)/\mathbb{C}$ since equivalent functions in $D_p(G)/\mathbb{C}$ differ by a constant. It was shown in Proposition 3.4 of \cite{Puls} that if $h \in HD_p(G)/ \mathbb{C}$ and $f \in ( \overline{\ell^p(G)})_{D(p)}$, then $\langle \triangle_p h, f \rangle = 0$. Consequently, if $h \in HD_p(G)/\mathbb{C}$, then $\langle \triangle_p h, \cdot \rangle$ defines a continuous TILF on $D_p(G)/ \mathbb{C}$. Thus there are no nonzero continuous TILFs on $D_p(G)/ \mathbb{C}$ when $HD_p(G)$ only contains the constant functions.

If $\bar{H}_{(p)}^1(G) = 0$, then $(\overline{\ell^p(G)})_{D(p)} = D_p(G)/\mathbb{C}$. It is known that $\ell^p(G)$ is closed in $D_p(G)/\mathbb{C}$ if and only if $G$ is nonamenable, \cite[Corollary 1]{guichardet}. As was mentioned in Section \ref{Mainresults}, if $G$ is nonamenable, then zero is the only TILF on $\ell^p(G)$. Consequently zero is the only TILF on $D_p(G)/\mathbb{C}$ when $G$ is nonamenable and $\bar{H}_{(p)}^1 (G) =0$. Summing up we obtain:
\begin{Thm}
Let $G$ be an infinite, finitely generated group and let $1 < p \in \mathbb{R}$. The following are equivalent
\begin{enumerate}
\item $\bar{H}^1_{(p)} (G) = 0$
\item $\mbox{Either } \partial_p(G) = \emptyset \mbox{ or } \#(\partial_p(G)) = 1$
\item $HD_p(G) = \mathbb{C}$
\item $BHD_p(G) = \mathbb{C}$
\item The only continuous TILF on $D_p(G)/\mathbb{C}$ is zero\\
If moreover $G$ is nonamenable, then this is still equivalent to:
\item Zero is the only TILF on $D_p(G)/\mathbb{C}$
\end{enumerate}
\end{Thm}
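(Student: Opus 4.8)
The plan is to assemble the six conditions from results already established in the preceding sections, since each individual link has essentially been proved. I would organize conditions (1)--(5) around (1) as a hub, using a chain of equivalences that connects all five, and then treat (6) separately under the nonamenability hypothesis.

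For the core equivalences among (1)--(5) I would proceed as follows. The equivalence (1) $\Leftrightarrow$ (2) is immediate from Theorem \ref{charvanishing}, which asserts that $\bar{H}^1_{(p)}(G) \neq 0$ iff $\#(\partial_p(G)) > 1$; negating both sides shows that $\bar{H}^1_{(p)}(G) = 0$ is equivalent to $\partial_p(G) = \emptyset$ or $\#(\partial_p(G)) = 1$. The equivalence (2) $\Leftrightarrow$ (4) follows the same way from Theorem \ref{plious}, whose negation reads $BHD_p(G) = \mathbb{C}$ iff $\#(\partial_p(G)) \leq 1$. For (3) $\Leftrightarrow$ (4), I would invoke the fact already used in the proof of Theorem \ref{charvanishing}: since the Cayley graph has every vertex of degree $\#(S)$, Lemma 4.4 of \cite{H2} gives $BHD_p(G) = \mathbb{C}$ iff $HD_p(G) = \mathbb{C}$. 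Finally (1) $\Leftrightarrow$ (5) is the negation of Theorem \ref{charlptilf}. These four links connect the chain $(3)-(4)-(2)-(1)-(5)$, so the five conditions are mutually equivalent. (As a shortcut, (1) $\Leftrightarrow$ (3) could instead be cited directly from Theorem 3.5 of \cite{Puls}.)

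For the addition of (6) under nonamenability, I would argue the two directions. The implication (6) $\Rightarrow$ (5) requires no hypothesis and is immediate, since every continuous TILF is in particular a TILF; thus if zero is the only TILF it is certainly the only continuous one. For (1) $\Rightarrow$ (6), assume $G$ is nonamenable and $\bar{H}^1_{(p)}(G) = 0$, so that $(\overline{\ell^p(G)})_{D(p)} = D_p(G)/\mathbb{C}$. By \cite[Corollary 1]{guichardet}, nonamenability is precisely the condition that $\ell^p(G)$ be closed in $D_p(G)/\mathbb{C}$, whence $\ell^p(G) = D_p(G)/\mathbb{C}$. Since by \cite{Willis1} zero is the only TILF on $\ell^p(G)$ for nonamenable $G$, it follows that zero is the only TILF on $D_p(G)/\mathbb{C}$. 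Combined with the equivalence of (1) and (5) already established, this shows that (6) is equivalent to the other conditions in the nonamenable case.

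Since every ingredient has been proved earlier, there is no genuine obstacle; the only points demanding care are the precise negations of the ``$\neq$'' statements in Theorems \ref{charvanishing}, \ref{plious}, and \ref{charlptilf}, and the application of the two external results under their stated hypotheses. The nonamenable implication (1) $\Rightarrow$ (6) is where the real work happens, as it is exactly here that Guichardet's closure criterion and Willis's triviality theorem are brought to bear.
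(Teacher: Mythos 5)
Your proposal is correct and matches the paper's own argument: the paper likewise obtains (1)$\Leftrightarrow$(2), (2)$\Leftrightarrow$(4), (3)$\Leftrightarrow$(4), and (1)$\Leftrightarrow$(5) by citing Theorem \ref{charvanishing}, Theorem \ref{plious}, Lemma 4.4 of \cite{H2}, and Theorem \ref{charlptilf}, and handles (6) in the nonamenable case exactly as you do, via \cite[Corollary 1]{guichardet} to identify $\ell^p(G)$ with $D_p(G)/\mathbb{C}$ and then \cite{Willis1} to kill all TILFs. No gaps; the chain of negations and the trivial direction (6)$\Rightarrow$(5) are handled correctly.
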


We will now give some examples that show zero is not the only $TILF$ on $D_p(G)/\mathbb{C}$ when $G$ is nonamenable, this differs from the $\ell^p(G)$ case. It was shown in \cite[Corollary 4.3]{Puls} that $\bar{H}^1_{(p)} (G) \neq 0$ for groups with infinitely many ends and $1 < p \in \mathbb{R}$. Thus by Theorem \ref{charlptilf} there exists a nonzero continuous TILF on $D_p(G)/\mathbb{C}$. Another question that now arises is: if there is a nonzero continuous TILF on $D_r(G)/\mathbb{C}$ for some nonamenable group $G$ and some real number $r$, then is it true that there is a nonzero continuous TILF on $D_p(G)/\mathbb{C}$ for all real numbers $p>1$? The answer to this question is no. To see this let $\mathcal{H}^n$ denote hyperbolic $n$-space, and suppose $G$ is a group that acts properly discontinuously on $\mathcal{H}^n$ by isometries and that the action is cocompact and free. By combining Theorem 2 of \cite{BMV} and Theorem 1.1 of \cite{PulsADM} we obtain $\bar{H}^1_{(p)}(G) \neq 0$ if and only if $p > n-1$.

\bibliographystyle{plain}
\bibliography{pboundlp2}
\end{document}